\pgfplotsset{compat=1.15}
\newtheorem{definition}{Definition}
\newtheorem{theorem}{Theorem}
\newtheorem{lemma}{Lemma}
\newtheorem{prop}{Proposition}
\newtheorem{corollary}{Corollary}
\newtheorem{conj}{Conjecture}
\newtheorem*{theorem*}{Theorem}
\newtheorem{remark}{Remark}
\newcommand{\bb}{\mathbb}
\newcommand{\mbf}{\mathbf}
\renewcommand{\to}{\rightarrow}
\renewcommand{\cal}{\mathcal}
\renewcommand{\leq}{\leqslant}
\renewcommand{\geq}{\geqslant}
\begin{document}
\title {Abelian Nivat's conjecture for non-rectangular patterns}

\author{Nikolai Geravker$^1$ \textit{kolya-ger@yandex.ru} 
\and Svetlana Puzynina$^{1,2}$ \textit{s.puzynina@gmail.com}
}

\date{%
    $^1$Saint Petersburg State University, Russia\\%
    $^2$Sobolev Institute of Mathematics, Russia\\[2ex]%
    \today
}

\maketitle

\definecolor{xdxdff}{rgb}{0.49019607843137253,0.49019607843137253,1}
\definecolor{ududff}{rgb}{0.30196078431372547,0.30196078431372547,1}
\definecolor{cqcqcq}{rgb}{0.7529411764705882,0.7529411764705882,0.7529411764705882}

	
	\begin{abstract}In this paper, we study the relation between periodicity of two-dimensional words and their abelian pattern complexity. A pattern $\cal{P}$ in $\mathbb{Z}^n$ is the set of all translations of some finite subset $F$ of $\mathbb{Z}^n$. An $F$-factor of an infinite word is a finite word restricted to $F$. Then the pattern complexity over a pattern $\cal{P}$ counts the number of distinct $F$-factors of an infinite word, for $P\in \cal{P}$. Two finite words are called abelian equivalent if for each letter of the alphabet, they contain the same numbers of occurrences of this letter. The abelian pattern complexity counts the number of $F$-factors up to abelian equivalence. As the main result of the paper, we characterize two-dimensional convex patterns with the following property: if abelian pattern complexity over a pattern $\cal{P}$ is equal to 1, then the word is fully periodic. Similar result holds for a function on $\mathbb{Z}^2$ instead of a word and for constant sums instead of abelian complexity equal to 1.
	In dimensional 1, we characterize patterns for which there exist non-constant functions with constant sums. 
	\end{abstract}
	
	\section{Introduction}
	
	One of the most studied topics in combinatorics on words is the  of infinite words, both in the one-dimensional and in the multidimensional case. An infinite one-dimensional word $w$ is an infinite sequence of symbols from a finite set called the alphabet. Its complexity is defined as a function $p(n)$ which counts, for each integer $n$, the number of its distinct factors (i.e., blocks of consecutive letters) of length $n$. The complexity of an infinite word provides a useful measure of the extent of randomness of the word and more generally of the subshift it generates. For example, periodic words have bounded factor complexity, while digit expansions of normal numbers have maximal complexity. The notion of a complexity of infinite words is closely related to the notion of topological entropy of discrete dynamical systems \cite{AKA}.  In fact, for a one-dimensional subshift, and in particular for a subshift generated by an infinite word, its  topological entropy is given by the exponential growth
rate of its complexity.
	
	First steps in complexity theory of words were made by Morse and Hedlund in 1938. They proved that if for an infinite word there exists $n$ such that $p(n) \leq n$,  
then the word is periodic  \cite{MoHe1}.
This result is classical in combinatorics on words and is referred to as Morse and Hedlund theorem. This theorem implies that the minimal complexity of an aperiodic word is at least $n+1$. Words of complexity $n+1$ for each $n$ exist and they are called Sturmian words (see, e.g., \cite[Chapter~2]{Lo}). The family of Sturmian words has been widely studied for their
theoretical importance and applications to various fields of science. They admit several equivalent
characterisations of algebraic, arithmetic and 
geometrical nature. For example, in \cite{MoHe2} Hedlund and Morse showed that each Sturmian word may be
realized geometrically by an irrational rotation on the circle.

	Nivat's conjecture  \cite{Nivat} is a generalization of Morse and Hedlund theorem 
	to two dimensions. A two-dimensional word (or a configuration) $\mbf{w}$ is an element of $A^{\bb{Z}^2}$, where $A$ is a finite set called an alphabet. A word $\mbf{w}$ is called \emph{
	periodic} if there exist a vector $p \in \bb{Z}^2$ such that $\mbf{w}(x) = \mbf{w}(x + p)$ for all $x \in \bb{Z}^2$. A complexity of a two-dimensional word $\mbf{w}$ is a function
$p_{\mbf{w}}(m, n)$ counting for each $m, n \in \bb{N}$ the number of distinct rectangular $m \times n$ blocks.
	
	\begin{conj}[Nivat, 1997]\label{Nivat'conj}
		Let $\mbf{w}$ be a two-dimensional word. If there exists $m, n$ such that $p_{\mbf{w}}(m, n) \leq mn$, then $\mbf{w}$ is periodic. 
	\end{conj}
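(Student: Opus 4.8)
The plan I would follow is shaped by the fact that Conjecture~\ref{Nivat'conj} is a well-known open problem, so what follows is the line of attack I would pursue together with the precise point at which it is expected to stall. First I would replace the single configuration $\mbf{w}$ by the $\bb{Z}^2$-subshift $X_{\mbf{w}}\subseteq A^{\bb{Z}^2}$ it generates (the closure of the orbit of $\mbf{w}$ under the shift action), so that the hypothesis $p_{\mbf{w}}(m,n)\le mn$ becomes a uniform bound on the number of $m\times n$ patterns occurring anywhere in $X_{\mbf{w}}$. It then suffices to produce a single nonzero $p\in\bb{Z}^2$ that is a period of every element of $X_{\mbf{w}}$; periodicity of $\mbf{w}$ itself follows. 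From there I see two complementary routes, which I would try to run in tandem.

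\emph{Combinatorial route.} Fix $n$ and regard $\mbf{w}$ as a one-dimensional word over the (large) alphabet of $1\times n$ columns. Sub-linear growth of $m\mapsto p_{\mbf{w}}(m,n)$, together with the Morse--Hedlund theorem applied to horizontal strips, forces periodicity of bounded strips along several directions; the task is to show that these partial periods are forced to align into one global period vector. This is exactly the step where the arguments of Epifanio--Koskas--Mignosi, Quas--Zamboni and Cyr--Kra expend a multiplicative constant, which is why only the weaker hypotheses $p_{\mbf{w}}(m,n)\le mn/2$ (and the cases with $m$ small) are currently proved.

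\emph{Algebraic route.} Encode $\mbf{w}$ by the two-variable formal series $f=\sum_{(i,j)}\mbf{w}(i,j)X^iY^j$. A rectangle of low complexity yields a nonzero Laurent polynomial $q$ for which $qf$ has very restricted support; one then studies the annihilator ideal of $f$, factors its generators into products of terms of the form $X^aY^b-1$, and tries to conclude that a single such factor already annihilates $f$ — which is precisely periodicity along $(a,b)$. The factorization into difference-of-monomial terms is available (Kari--Szabados), but deducing, at the sharp threshold $mn$, that one factor suffices is the open point.

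In both routes the obstacle is the same and it is structural: the bound $mn$ is a threshold of the same exact-counting character as $p(n)\le n$ in Morse--Hedlund and, at least for short rectangles, is already known to be sharp, so no argument that loses even a constant factor, or that only produces periodicity along one distinguished family of lines, can close the gap. A proof would have to marry the rigidity of the algebraic annihilator with a genuinely two-dimensional combinatorial argument pinning down how the finitely many period vectors of bounded strips must be mutually compatible; achieving this at the threshold $mn$ is the crux, and it is why the present paper instead develops the abelian analogue, where the additive structure of letter counts makes the corresponding compatibility argument tractable.
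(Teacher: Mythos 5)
This statement is Nivat's conjecture, which the paper states as an open problem and does not prove; it is included only as background motivation, with the partial results ($mn/144$, $mn/16$, $mn/2$, small rectangles, the asymptotic version) cited from the literature. Your proposal correctly recognizes this and your summary of the two known lines of attack and of where they stall at the sharp threshold $mn$ is accurate and consistent with the paper's own survey, so there is nothing to reconcile: neither you nor the paper supplies a proof, and none is currently known.
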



	Two-dimensional words of complexity $mn+1$ for each $m$, $n$ exist and they have been characterized in \cite{JC99}. Remarkably, their structure completely different from Sturmian words.	

Nivat's conjecture remains open despite of efforts of different scientists. It has been proven is some weak forms, for example, in an asymptotic form  by  Kari, Szabados \cite{KS2015}. 
	In \cite{ECM03} it was shown that $P_{\mbf{w}}(m, n)\leq mn/144$ is enough to guarantee the periodicity of $\mbf{w}$. 
	This bound has been improved to $P_{\mbf{w}}(m, n) \leq mn/16$ in \cite{QZ04}, and recently to $P_{\mbf{w}}(m, n) \leq mn/2$ in \cite{CK2015} using dynamical systems approach. The problem can be translated to a dynamical one as follows.

 We endow the alphabet $A$ with the discrete topology, $A^{\mathbb{Z}^d}$ with the product topology, and define a $\mathbb{Z}^d$-action on $X = A^{\mathbb{Z}^d}$ by $(T^u \mbf{w})(x) = \mbf{w}(x + u)$ for $u \in \mathbb{Z}^d$. With respect to this topology, the maps $T^u : X \to X$ are continuous. Let $O(\mbf{w}) = \{T^u \mbf{w} | u \in \mathbb{Z}^d\}$ denote the $\mathbb{Z}^d$-orbit of $\mbf{w}\in X$, and $\overline{O} (\mbf{w})$ its closure. In this dynamical setting, one can rephrase periodicity. The statement that $\mbf{w}$ has a periodicity vector is equivalent to saying that $\mathbb{Z}^d$ does not act faithfully on $\overline{O} (\mbf{w})$. A word $\mbf{w}$ is fully periodic if it has $d$ linearly independent periodicity vectors, which means that  $\overline{O} (\mbf{w})$ is finite. The dynamical systems approach from \cite{CK2015} makes use of the nonexpansive subspaces of this action. A similar method has been used to prove the conjecture for small rectangle sizes \cite{CK2016}. 
For some other versions of Nivat's conjecture and minimal complexity in two or more dimensions we refer to \cite{Ca99,DR11,KM18}.

	The main objective of this paper
	is finding generalizations of Morse and Hedlund theorem 
	and Nivat's Conjecture 
	for abelian pattern complexity. 
	We recall that two finite words $u$ and $v$ are said to be \emph{abelian equivalent} if $|u|_a = |v|_a$ for all $a \in A$, where $|u|_a$ denotes the number of occurrences of the letter $a$ in $u$. 
 The \emph{abelian complexity} $a_\mbf{w}(n)$ of a  (one-dimensional) word $\mbf{w}$ is the function counting number of distinct abelian classes of factors of length $n$. 
	This definition can be extended to two or more dimensions in a natural way as a function counting the number of abelian classes of rectangular blocks.  
	
	For one-dimensional words an abelian analogue of Morse and Hedlund theorem 
	is straightforward: clearly, the condition $a_{\mbf{w}}(n) = 1$ implies that $\mbf{w}$ is $n$-periodic. Aperiodic words of abelian complexity $2$ for each $n$ exist and surprisingly the set of words with this property coincides with the family of Sturmian words \cite{CoHe73}. So, among aperiodic words, Sturmian words have minimal complexity both in classical and in abelian sense.  The study of abelian complexity of one-dimensional infinite words has been developed, e.g., in \cite{KWZ15,MR13,RCZ11,S09}. For two-dimensional words, the abelian modifications of Nivat's conjecture has been studied  in  \cite{P19}. It has been shown that there exists an aperiodic word $\mbf{w}$ and integers $m$ and $n$ such that $a_{\mbf{w}}(m, n) = 1$. However, for an aperiodic recurrent two-dimensional word $\mbf{w}$ there exist infinitely many pairs of numbers $m, n$ such that $a_{\mbf{w}}(m, n) \geq 3$. 

    	The notions of a complexity and an abelian complexity can be extended to any pattern. More precisely, we call any finite subset $F$ of $\mathbb{Z}^n$ a {\emph{figure}}, and a pattern $\cal{P}$ is a set of all integer translations of some figure $F$. We can fix some order on the elements of $F$: $F=\{ x^1, \ldots, x^k\}$. Then for an infinite $n$-dimensional  word ${\bf w}$ the word ${\bf w}_{x^1}\cdots {\bf w}_{x^k}$ is called an $F$-factor of ${\bf w}$. The \emph{pattern complexity} over the pattern $\cal{P}$ then counts the number of $F$-factors of the word ${\bf w}$, for all $F\in \cal{P}$. Similarly, the \emph{abelian pattern complexity} $a_{\mbf{w}}(\cal{P})$ over the pattern $\cal{P}$ counts the number of abelian classes of $F$-factors of the word ${\bf w}$, for all $F\in \cal{P}$.

		A related concept of maximal pattern complexity has been introduced by Kamae and Zamboni in 2002 \cite{KZ2002}. The maximal pattern complexity $p^*_{\bf w}$ of a word ${\bf w}$ is defined as a function counting, for each $k$, the supremum of the pattern complexities for patterns defined by figures of size $k$. Similarly to factor complexity and abelian complexity, the maximal pattern complexity also gives a characterization of periodicity in the one-dimensional case: An infinite one-dimensional word ${\bf w}$ is eventually periodic if and only if $p^*_{\bf w}(k)<2k$ for some integer $k$ \cite{KZ2002}.  The abelian maximal pattern complexity also gives a characterization of aperiodicity in terms of so-called aperiodicity by projection \cite{KRX06,KWZ15}.

The main problem we study in the paper is the following: find a characterization of patterns such that for each word $\mbf{w}$ the condition $a_{\mbf{w}}(\cal{P}) = 1$ implies that $\mbf{w}$ is a periodic word. We call such patterns \emph{abelian rigid}. 
We remark that the set of $d$-dimensional words of abelian pattern complexity equal to $1$ for a pattern $\cal{P}$ forms a subshift, i.e. a closed $T$-invariant subset of $A^{\mathbb{Z}^d}$. Moreover, this subshift is a subshift of finite type, i.e., subshift defined by local constraints.
We also consider a variant of this problem when instead of a word we consider a function with finitely many values, and instead of abelian pattern complexity equal to $1$ we consider constant sums in the figures of the pattern. Clearly, such a function can also be considered as a word with letters corresponding to values of the function, and if we prove under certain conditions periodicity of all functions with finitely many values, the periodicity of words in these conditions follows immediately. 

 In terminology of integer-valued functions on $\mathbb{Z}^n$ with constant sums in certain patterns related problems have already been considered in the literature. One of them is a discrete analogue of Pompeiu's problem, which can be stated as follows. Let $\Omega$ be a finite set of patterns
of $\mathbb{Z}^n$.
The problem is to determine when the only function $\varphi:
\mathbb{Z}^n\to \mathbb{Z}$ such that
$$\sum_{ x \in F } \varphi({ x})=0 \mbox{ for every } F \in \cal{P}, \cal{P}\in \Omega  $$
is the zero function (see \cite{Zeil78}). The paper \cite{AP08} is focused on
the cases when $\Omega$ consists of one ball or one sphere.

It is also natural to consider patterns with weights in the following sense. 
Each point $x$ from a figure $F^w$ is equipped with a real number $g_x$ considered as its weight, and all translations of this figure define a pattern $\cal{P}^w$  with weights. We then consider functions for which $\sum\limits_{x \in F^w} \mbf{w}(x)g_x = 0$ for each $ F^w \in \cal{P}^w$.

In the one-dimensional case, it is easy to see that $\mbf{w}$ is periodic if $a_{\mbf{w}}(\cal{P}) = 1$. 
A natural question is, for which patterns there exists a non-unary word with abelian pattern complexity 1 over this pattern? In this paper, we obtain a necessary condition for that, and a characterization in terms of sums functions. 
 To formulate the condition, we associate a polynomial to a pattern $\cal{P}$ as follows. We choose some figure $F_{\cal{P}} \in\cal{P}$ (for example such that the minimal coordinates of the points of the figure are equal to 0). Then the \emph{polynomial of the pattern} $\cal{P}$ is defined by
	$$
	    Poly_{\cal{P}}(x) = \sum_{(t, g_t) \in F_{\cal{P}}} g_t x^t,
	$$
	where $x^t = x_1^{t_1} x_2^{t_2} \cdots x_n^{t_n}$.	
	
	\begin{theorem*}
		Let $\cal{P}$ be a pattern with weights. 
		Then there exists a non-zero function $\mbf{w}:\mathbb{Z}\to \mathbb{C}$ such that
		\[
			\sum\limits_{x \in F} \mbf{w}(x)g_x = 0
		\mbox{ for all } F \in \cal{P}, 
		\]
	 if and only if there exists $n$ such that $Poly_{\cal{P}}(x)$ is divisible by $n$'th cyclotomic polynomial $\Phi_n(x)$ in the ring $\bb{Q}[x]$.
	\end{theorem*}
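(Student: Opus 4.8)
\medskip
\noindent\emph{Proof idea.} The plan is to read the condition as a two-sided linear recurrence. Let $S$ be the shift $(S\mbf{w})(k)=\mbf{w}(k+1)$ on functions $\mathbb{Z}\to\mathbb{C}$ and put $P(x)=Poly_{\cal{P}}(x)$. Since every figure of $\cal{P}$ is a translate $F_{\cal{P}}+k$ carrying the same weights, the requirement ``$\sum_{x\in F}\mbf{w}(x)g_x=0$ for all $F\in\cal{P}$'' is literally $\big(P(S)\mbf{w}\big)(k)=\sum_{(t,g_t)\in F_{\cal{P}}}g_t\,\mbf{w}(k+t)=0$ for every $k$, i.e.\ $P(S)\mbf{w}=0$. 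Writing $P(x)=x^{r}\widetilde{P}(x)$ with $\widetilde{P}(0)\neq 0$ changes neither side of the claimed equivalence (since $S$ is invertible and $x\nmid\Phi_n$), so I may assume $P(0)\neq 0$; with $d=\deg P<\infty$, the recurrence is then solvable both forwards and backwards. Finally, since $\Phi_n$ is the minimal polynomial over $\mathbb{Q}$ of a primitive $n$-th root of unity, the condition ``$\Phi_n\mid P$ for some $n$'' says precisely that $P$ vanishes at some root of unity. So the task is to tie the existence of a nonzero solution of $P(S)\mbf{w}=0$ to $P$ having a root of unity among its zeros.

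For the direction ``$\Phi_n\mid P\Rightarrow$ a nonzero solution exists'': put $\zeta=e^{2\pi i/n}$, so $P(\zeta)=0$, and take $\mbf{w}(k):=\zeta^{k}$; then $\big(P(S)\mbf{w}\big)(k)=\zeta^{k}P(\zeta)=0$, and $\mbf{w}$ is nonzero, $n$-periodic and finite-valued. (More generally any $\mathbb{C}$-linear combination of the sequences $k\mapsto\eta^{k}$, with $\eta$ a root of unity that is a zero of $P$, is a solution; a single one suffices.)

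Conversely, from a nonzero solution $\mbf{w}$ I would first pass to a periodic one. For a word, i.e.\ a finite-valued $\mbf{w}$, this is automatic: the window $\big(\mbf{w}(k),\dots,\mbf{w}(k+d-1)\big)$ takes only finitely many values, so two windows coincide, and two-sided solvability of the recurrence propagates the coincidence in both directions, making $\mbf{w}$ periodic, of some period $N$; hence also $(S^{N}-I)\mbf{w}=0$. A nonzero function annihilated by both $P(S)$ and $S^{N}-I$ is annihilated by $g(S)$, where $g=\gcd(P,\,x^{N}-1)\in\mathbb{Q}[x]$ (Bézout), so $g$ is non-constant. Since $x^{N}-1=\prod_{d\mid N}\Phi_{d}$ is squarefree with pairwise coprime irreducible factors, every non-constant divisor of it contains some $\Phi_{d}$; therefore $\Phi_{d}\mid g\mid P$. (Equivalently, diagonalizing $S$ on $N$-periodic sequences by the discrete Fourier transform: writing $\mbf{w}(k)=\sum_{j}\widehat{\mbf{w}}(j)\,\omega^{jk}$ with $\omega=e^{2\pi i/N}$, the relation $P(S)\mbf{w}=0$ becomes $\widehat{\mbf{w}}(j)\,P(\omega^{j})=0$ for all $j$, so $P$ vanishes at some $\omega^{j}$ with $\widehat{\mbf{w}}(j)\neq 0$, and $\Phi_n\mid P$ for $n$ the order of that $\omega^{j}$.)

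The main point to get right is the reduction to the periodic case --- identifying the class of functions over which the statement is genuinely quantified and showing that a nonzero member can be chosen periodic. For words this is the pigeonhole-on-windows argument above, and it really does use that the recurrence is two-sided, i.e.\ the normalization of $F_{\cal{P}}$ (giving $P(0)\neq 0$ after stripping the factor $x^{r}$) together with $\deg P<\infty$. Everything after that is routine: the factorization $x^{N}-1=\prod_{d\mid N}\Phi_{d}$, its squarefreeness, and the irreducibility of each $\Phi_{d}$ over $\mathbb{Q}$, which is what upgrades ``shares a zero with $P$'' to ``divides $P$ in $\mathbb{Q}[x]$''.
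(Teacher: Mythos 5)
Your proposal is correct and follows essentially the same route as the paper: the easy direction via the root-of-unity sequence $k\mapsto e^{2\pi i k/n}$, and the converse via pigeonhole on length-$d$ windows to force periodicity (the paper's Proposition~\ref{lem:always_per}), then B\'ezout applied to $\gcd(Poly_{\cal{P}},x^N-1)$ and the squarefree factorization $x^N-1=\prod_{d\mid N}\Phi_d$ to extract a cyclotomic divisor, exactly as in the paper's proof of Theorem~\ref{th:1d}. Your shift-operator phrasing $P(S)\mbf{w}=0$ is just the paper's Lemma~\ref{lem:lin_comb} in disguise, and you correctly note the point the loose statement obscures, namely that $\mbf{w}$ must be finite-valued for the pigeonhole step.
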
 
	
	 We say that a pattern is {\emph{abelian rigid}}  the abelian pattern complexity over this pattern is equal to 1 only for fully periodic words. 	As the main result of the paper, we obtain the following characterization of abelian rigid  two-dimensional convex patterns:
	
	
	\begin{theorem*}
		A convex two-dimensional pattern $\cal{P}$ is abelian rigid if and only if $Poly_{\cal{P}}(x,y)$ does not have a divisor of the form 
		$\sum\limits_{i = 0}^l x^{iv_1} y^{iv_2}$ for some integers $v_1$, $v_2$ and for some positive integer $l$.
	\end{theorem*}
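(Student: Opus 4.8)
The plan is to attack both directions through the one-dimensional characterization already established, by slicing a two-dimensional configuration along a suitable direction and converting periodicity questions for patterns into divisibility questions for $Poly_{\cal P}$. For the ``only if'' direction (if $Poly_{\cal P}$ has a divisor of the form $q_l(x,y)=\sum_{i=0}^l x^{iv_1}y^{iv_2}$, then $\cal P$ is not abelian rigid), I would construct an explicit non-periodic configuration whose $F$-factors all have the same abelian class. The key observation is that $q_l(x,y)$ is the polynomial of the one-dimensional pattern consisting of $l+1$ equally spaced points in direction $(v_1,v_2)$; composing with the substitution $x\mapsto x$, $y\mapsto 1$ (or more precisely taking a line $L$ in $\mathbb Z^2$ in direction $(v_1,v_2)$ and a line transversal to it) we reduce to a one-dimensional pattern whose polynomial is $\sum_{i=0}^l z^{i}$, i.e. $(z^{l+1}-1)/(z-1)$, which is divisible by $\Phi_n(z)$ for every $n\mid(l+1)$, $n>1$. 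By the first theorem quoted above there is a nonzero one-dimensional function $\varphi$ killed by this pattern; extending $\varphi$ to be constant along the complementary direction (or, if we want a genuine word, quantising $\varphi$ and using a $2$-colouring periodic in one direction and governed by $\varphi$ in the other) yields a configuration $\mbf w$ on $\mathbb Z^2$ that is not fully periodic — here we must be careful to ensure non-full-periodicity, which is where convexity of $\cal P$ and the fact that $q_l$ is a \emph{genuine} divisor (so $Poly_{\cal P}$ is not itself a product forcing extra periodicity) will be used — while all $F$-factors have constant sum, hence constant abelian class after reduction to a binary alphabet.

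For the ``if'' direction (if $Poly_{\cal P}$ has no such divisor, then $\cal P$ is abelian rigid) the plan is the contrapositive: assume $\mbf w$ is a configuration with $a_{\mbf w}(\cal P)=1$ that is not fully periodic, and produce a divisor of the shape $q_l$. Passing from a word to an integer-valued function by subtracting the common letter-count vector, $a_{\mbf w}(\cal P)=1$ says exactly that the function $\psi = \mbf w - (\text{average over }F)$ has sum zero over every translate of $F$, i.e. $\psi$ is annihilated by $Poly_{\cal P}$ acting as a convolution operator on $\mathbb Z^2$. Now I would invoke the annihilator-ideal / Laurent-polynomial machinery (in the spirit of Kari–Szabados and Cyr–Kra): the set of Laurent polynomials annihilating $\psi$ is an ideal $\operatorname{Ann}(\psi)\subseteq \mathbb C[x^{\pm1},y^{\pm1}]$ containing $Poly_{\cal P}$, and $\psi$ is not fully periodic precisely when the variety $V(\operatorname{Ann}(\psi))$ is infinite. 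A standard line-polynomial argument then shows that $V(\operatorname{Ann}(\psi))$ contains a one-dimensional component, and because $Poly_{\cal P}$ vanishes on it, $Poly_{\cal P}$ is divisible by the defining polynomial of that component; the crucial step is to argue that this defining polynomial — a priori any irreducible factor — can be taken to be, or to divide, something of the form $\sum_{i=0}^l x^{iv_1}y^{iv_2}$. This last reduction is where \textbf{convexity of $\cal P$} enters essentially: convexity forces the Newton polygon of $Poly_{\cal P}$ to be a genuine polygon and rules out ``fat'' two-dimensional components in the annihilator variety, pinning the one-dimensional component down to a line-shaped (equally spaced, collinear) configuration; without convexity one expects counterexamples, which is consistent with the phrasing of the theorem.

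The main obstacle, as in all work around Nivat's conjecture, is this geometric step in the ``if'' direction: controlling the shape of the annihilator variety and showing that the relevant one-dimensional component must be the zero set of a cyclotomic-type polynomial $q_l$ rather than an arbitrary irreducible curve. I expect to need a careful combinatorial analysis of the Newton polygon of $Poly_{\cal P}$ together with the convexity hypothesis to push an arbitrary curve component into the $q_l$ form — possibly by an argument showing that any line along which $\psi$ has a one-dimensional family of symmetries must carry a one-dimensional periodicity-type relation of bounded length $l+1$, with $l$ controlled by the width of $\cal P$ in the transversal direction. The ``only if'' direction is comparatively routine modulo the bookkeeping needed to turn the complex-valued $\varphi$ from the first theorem into an honest finite-alphabet word, which I would handle by the standard trick of thresholding an irrational-rotation-type function so that the resulting binary word inherits constant $F$-sums and fails to be fully periodic.
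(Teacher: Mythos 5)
Your proposal has genuine gaps in both directions, and in the hard direction it follows a route quite different from the paper's while leaving the decisive step open. For the ``only if'' direction, extending a one-dimensional function $\varphi$ with vanishing sums over $\sum_{i=0}^{l}z^{i}$ ``to be constant along the complementary direction'' cannot work as stated: by Proposition~\ref{lem:always_per} any such $\varphi$ is itself periodic, so the resulting configuration is fully periodic and witnesses nothing; the same objection applies to a $2$-colouring ``periodic in one direction and governed by $\varphi$ in the other,'' and a Sturmian-type thresholding does not have constant sums (Sturmian words have abelian complexity $2$, not $1$). The missing idea is to perturb aperiodically in the transversal direction by a map that preserves the abelian class of each progression: the paper sets $\mbf{w}((ka+r)u+bu')=(a+b+\varepsilon_b)\bmod n$ with $\varepsilon_b$ the prime indicator of $b$, so that along each line the letters run cyclically through $\bb{Z}/n\bb{Z}$ (hence every $\cal{P}'$-factor, and by the factorization $Poly_{\cal{P}}=l(v,n)Q$ every $\cal{P}$-factor, lies in one abelian class), while the aperiodicity of the primes destroys periodicity in the $u'$-direction. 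Convexity and ``genuineness'' of the divisor, which you invoke here, play no role in this direction.

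For the ``if'' direction your annihilator-ideal plan is genuinely different from the paper, which argues combinatorially: a zero-extension lemma (Lemma~\ref{lem:step_of_ext} and Proposition~\ref{lem:full_ext}) shows that a configuration with constant $\cal{P}$-sums vanishing on a large polygon parallel to the convex hull of $F_{\cal{P}}$ must vanish on a slightly larger homothetic polygon --- this is exactly where relative primality (the gcd of the widths of the figure along lines in a fixed direction being $1$) is used --- and a pigeonhole argument on differences of translates then produces two independent period vectors. Your route might be completable, but the step you yourself call ``the main obstacle'' --- showing that the one-dimensional component of $V(\operatorname{Ann}(\psi))$ is cut out by a line polynomial $\Phi_d(x^{v_1}y^{v_2})$ with $d>1$, and that such a factor of a convex $0$--$1$ polynomial forces a divisor $\sum_{i=0}^{l}x^{iv_1}y^{iv_2}$ --- is precisely the content of the theorem and is not proved in your sketch. (The second half is in fact manageable: by convexity the figure meets each line in direction $v$ in an interval, so $Poly_{\cal{P}}=\sum_j M_j\,\frac{z^{n_j}-1}{z-1}$ with $z=x^{v_1}y^{v_2}$ and the monomials $M_j$ on distinct cosets, whence $\Phi_d(z)\mid Poly_{\cal{P}}$ iff $d\mid\gcd_j n_j$, which yields the strongly linear divisor. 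The first half requires the full Kari--Szabados decomposition applied to the difference configuration together with a correct translation of ``not fully periodic'' into a statement about the variety, neither of which you verify.) As it stands, this direction is a research plan rather than a proof.
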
	
	
	

	\section{Definitions and notation}

In the paper, we mostly follow the usual terminology of combinatorics on words from \cite{Lo}.  Let $A$ be a finite set and $n$ be an integer. A function from $\bb{Z}^n$ to $A$ is called a \emph{$n$-dimensional word on the alphabet $A$}, or a \emph{configuration}. The elements of $A$ are called \emph{letters}. Let $\mbf{w}$ be an $n$-dimensional word and $u$ be a vector in $\bb{Z}^n$. If $\mbf{w}(x) = \mbf{w}(x + u)$ for each $x \in \bb{Z}^n$, then $\mbf{w}$ is called \emph{$u$-periodic}. A word is \emph{periodic} if there exists a vector $u$ such that the word $u$-periodic. A word is \emph{fully periodic} if there exist $n$ linearly independent vectors $u_1, u_2, \ldots, u_n \in \bb{Z}^n$ such that the word is $u_i$-periodic for all $i = 1, 2, \ldots n$.

Two finite words $u$ and $v$ are said to be \emph{abelian equivalent}, denoted by $u \sim_{ab} v$, if and only if $|u|_a = |v|_a$ for each $a \in A$, where $|u|_a$ denotes the number of occurrences of the letter $a$ in $u$. It is readily verified that $\sim_{ab}$ defines an equivalence relation on the set of finite words. 

Let $\mbf{w}$ be an $n$-dimensional word. Its \emph{complexity} is defined as a function $p_{\mbf{w}}:\mathbb{N}^n\to \mathbb{N}$ counting for each $(m_1,\dots,m_n)$ the number of distinct $m_1\times\dots \times m_n$-blocks (or factors) of $\mbf{w}$.  Similarly, its \emph{abelian complexity} is defined as a function counting the number of abelian classes of $n_1\times\dots \times n_d$-factors of $\mbf{w}$.

A finite subset of $\bb{Z}^n$ is called \textit{a figure of $\bb{Z}^n$}. Let $F_1$ and $F_2$	be figures of $\bb{Z}^n$. If there exists a translation $\tau$ such that $\tau(F_1) = F_2$, then we say that $F_1$ and $F_2$ are \textit{equivalent} and write $F_1 \sim F_2$. An equivalence class under $\sim$ is called a \textit{pattern}.

Let $F$ be a figure of $\bb{Z}^n$. A \textit{figure with weights} in $\bb{Z}^n$ is a finite set 
\[F^w = \{(u, g_u) | u \in F, g_u \in \bb{Z}\},\] 
where the integer $g_u$ is called the weight of a point $u$. Let $F^w_1$ and $F^w_2$ be figures with weights in $\bb{Z}^n$. If there exists a translation $\tau$ such that 
		\[
			F^w_2=\{(\tau(u), g_u)| (u,g_u) \in F^w_1\},
		\]
then we say that $F_1$ and $F_2$ are \textit{equivalent} and write $F_1 \sim_g F_2$. An equivalence class under $\sim_g$ is called  \textit{a pattern with weights}.

	
  
A natural generalization of the notion of a  complexity is the pattern complexity. Let $F^w=\{  (u_1, g_1), \ldots, (u_l,g_l)  \}$ 	be a figure with weights, and $A = \{a_1, a_2, \ldots, a_k\}$ be an alphabet. {Consider the set 
$\bb{Z}[a_1, a_2, \ldots, a_k]$ of linear combinations over variables from $A$.} A linear polynomial 
\[\sum\limits_{i = 1}^l \mbf{w}(u_i)g_i \in \bb{Z}[a_1, a_2 \ldots a_k]\] 
is called a \emph{linear combination of the word $\mbf{w}$ over $F^w$}. 
We say that this linear combination 
is a \emph{$\cal{P}$-linear combination} if $F^w$ belongs to the pattern (with weights) $\cal{P}$. Essentially, the $\cal{P}$-linear combination gives the sum of the letters from $F$ multiplied by  their weights.
	
	The \emph{abelian pattern complexity}  $a_{\mbf{w}}(\cal{P})$ of $\mbf{w}$ is the function counting the number of distinct $\cal{P}$-linear combinations of $\mbf{w}$. If all the weights are equal to 1, then we have the abelian pattern complexity counting the number of abelian equivalence classes of $\cal{P}$-factors, 
	i.e., factors of $\mbf{w}$ restricted to figures from $\cal{P}$. When we have weights, the occurrences of  each letter are counted with corresponding multiplicities (which could also be negative).

	For convenience, we select one figure to associate with a pattern $\cal{P}$ and call it canonical:
	
		\begin{definition}
	    Let $\cal{P}$ be a pattern with weights in the  $n$-dimensional space and $F_{\cal{P}}$ be the figure of $\cal{P}$ such that all the coordinates of the points in $F_{\cal{P}}$ are non-negative, and for each $i \in \{1, 2, \ldots ,n\}$ there exists a point of $F_{\cal{P}}$ with $i$-th coordinate equal to $0$. The figure $F_{\cal{P}}$ is called \emph{the  canonical figure} of the pattern $\cal{P}$.
	\end{definition}

  For example, the canonical figure for the pattern with weights defined by the figure $R$ from Fig.~\ref{fig:example_of_char}, is the Figure $R$ shifted by $(-1,-1)$, i.e. $\{((0,0), 10); ((0,1), 2); ((0, 3), -3); ((1,0), 1) ; ((1, 2), 1) ; ((2, 1), 4)\}$. %
	
\begin{definition}\label{def_poly} The \emph{polynomial of a pattern} $\cal{P}$ is defined by
	$$
	    Poly_{\cal{P}}(x) = \sum_{(t, g_t) \in F_{\cal{P}}} g_t x^t,
	$$
where $x^t = x_1^{t_1} x_2^{t_2} \cdots x_n^{t_n}$. 	\end{definition}
	
We remark that there is no particular difference which figure to choose for these notions. We could also choose another figure or associate with a pattern a class of polynomials for all figures from the pattern; these polynomials differ up to a multiple of the form $x_1^{i_1} \cdots x_n^{i_n}$, corresponding to a translation by a vector $(i_1, \ldots, i_n)$. 
		
			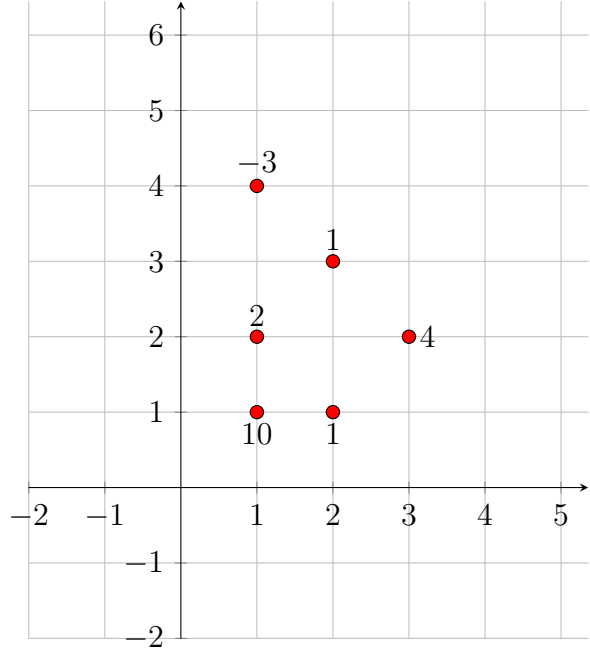
\begin{figure}[h]
	\centering
	\begin{tikzpicture}[line cap=round,line join=round,>=triangle 45,x=1cm,y=1cm]
\begin{axis}[
x=1cm,y=1cm,
axis lines=middle,
ymajorgrids=true,
xmajorgrids=true,
xmin=-2,
xmax=5.356501314907437,
ymin=-2,
ymax=6.446419066605355,
xtick={-8,-7,...,10},
ytick={-5,-4,...,6},]
\clip(-2,-2) rectangle (5.356501314907437,6.446419066605355);
\begin{scriptsize}
\draw [fill=red] (1,2) circle (2.5pt) node[above] {$2$};
\draw [fill=red] (1,4) circle (2.5pt) node[above] {$-3$};
\draw [fill=red] (2,3) circle (2.5pt) node[above] {$1$};
\draw [fill=red] (3,2) circle (2.5pt) node[right] {$4$};
\draw [fill=red] (2,1) circle (2.5pt) node[below] {$1$};
\draw [fill=red] (1,1) circle (2.5pt) node[below] {$10$};
\end{scriptsize}
\end{axis}
\end{tikzpicture}
	\caption{Figure $R = \{((1,1), 10); ((1,2), 2); ((1, 4), -3); ((2,1), 1) ; ((2, 3), 1) ; ((3, 2), 4)\}$} \label{fig:example_of_char}
	\end{figure}
	
	For example, the polynomial of the pattern $\cal{R}$ of the figure $R$ from Fig.~\ref{fig:example_of_char} is given by the following:
	$$
		Poly_{\cal{R}} (x, y) = 10 + 2y - 3y^3 + x + 4xy^2 + x^2y.
	$$


In this paper we are interested in patterns for which there exist non-periodic words with abelian pattern complexity equal to 1, and we introduce the following notion:
	\begin{definition} \label{def:good_patterns}
		A pattern $\cal{P}$ is called \emph{abelian rigid} if $a_{\bf{w}}(\cal{P}) = 1$ only for fully periodic words $\bf{w}$.
	\end{definition}

 In a symbolic dynamical terminology, the set of words with pattern abelian complexity equal to 1 over a pattern $\cal{P}$ forms a subshift. If we endow the alphabet $A$ with the discrete topology, $A^{\mathbb{Z}^d}$ with the product topology, and define a $\mathbb{Z}^d$-action on $X = A^{\mathbb{Z}^d}$ by $(T^u \mbf{w})(x) = \mbf{w}(x + u)$ for $u \in \mathbb{Z}^d$, then a {\emph{subshift}} on $A$ is a pair $
(X, T)$, where $X$ is a closed and $T$-invariant subset of $A^{\mathbb{N}^d}$. A subshift is said to be \emph{of finite type} if it is defined by finitely many forbidden patterns. It is not hard to see that the set of words with pattern abelian complexity equal to 1 over a pattern $\cal{P}$ form a subshift of finite type. Indeed, one can define the set of forbidden factors as all factors containing two distinct abelian  $\cal{P}$-factors at distance 1 horizontally or vertically.
	
	\section{Abelian complexity and one-dimensional words} \label{section:1d}

In this section, all words are one-dimensional. As the main result of this section, we give a necessary condition on a pattern $\cal{P}$ for existence of a non-unary word with pattern abelian complexity over $\cal{P}$ equal to 1 in terms of cyclotomic polynomials. We prove the condition in a slightly more general form, for patterns with weights and for integer-valued functions with constant sums. Moreover, for the sums the condition is necessary and sufficient (see  Theorem \ref{th:1d}).
	 
We begin with some notation.  Let $\mbf{w}$ be a bi-infinite one-dimensional word, i.e., $\mbf{w}$ belongs to $A^{\bb{Z}}$ for some alphabet $A$. The word $\mbf{w}$ is said to be \emph{periodic} if there exists an integer $p$ such that $\mbf{w}(i + p) = \mbf{w}(a)$ for each integer $i$.

	As we defined earlier, we associate with a pattern $\cal{P}$  its canonical figure $F_{\cal{P}}$ and a polynomial $Poly_{\cal{P}}(x) $ (see Definition \ref{def_poly}). In the one-dimensional case, $Poly_{\cal{P}}(x) $  corresponds to the element $F_{\cal{P}}$ of  $\cal{P}$ with its leftmost element at $0$. We then let $d$ denote its rightmost element:
		$$d= \max\limits_{(u, g_u) \in F_{\cal{P}}}\{u | g_u \ne 0\}. $$
		We say that  $d + 1$ is the \textit{diameter} of the pattern $\cal{P}$. 
	
In the same way we can define the figure of a polynomial $P(x)$. Let $P (x) = \sum_{i = 0}^m g_i x^i$ be a polynomial. Then the \emph{figure} $Fig_{P}$ of the polynomial $P(x)$ is defined by 
	\[
		Fig_{P} = \{(i, g_i) | i = 0, 1, \ldots, m\}.
	\]

In the next section, we are going to study the main question of the paper: characterize two-dimenisonal  patterns for which the only words functions with constant sums (or words with abelian complexity equal to 1), are the periodic ones. The following proposition treats a similar question in the one-dimensional case. In fact, this is easy to see that that all one-dimensional patterns satisfy the above property:

\begin{prop} \label{lem:always_per}
		Let $\cal{P}$ be a pattern with weights and $\mbf{w}$ be a word on an alphabet which is a finite subset of complex numbers. Suppose that there exists a constant $C$ such that 
		\[
		\sum\limits_{x \in F} \mbf{w}(x) g_x = C
		\]
		for any $F \in \cal{P}$. Then $\mbf{w}$ is periodic. 
	\end{prop}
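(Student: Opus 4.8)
The plan is to reduce the statement to a linear-recurrence argument. Let $F_{\cal{P}}$ be the canonical figure, with points $0 = u_0 < u_1 < \dots < u_l = d$ and weights $g_{u_0}, \dots, g_{u_l}$, where by the choice of canonical figure $g_{u_0} \neq 0$ and $g_{u_l} \neq 0$ (after discarding zero-weight points, or using $d$ as defined in the excerpt). For every $k \in \bb{Z}$ the hypothesis gives
\[
\sum_{j=0}^{l} g_{u_j}\, \mbf{w}(k + u_j) = C.
\]
Subtracting the equation for $k$ from the one for $k+1$ eliminates the constant $C$ and yields a homogeneous linear recurrence
\[
\sum_{j=0}^{l} g_{u_j}\bigl(\mbf{w}(k+1+u_j) - \mbf{w}(k+u_j)\bigr) = 0,
\]
valid for all $k \in \bb{Z}$, whose characteristic polynomial is $(x-1)\,Poly_{\cal{P}}(x)$ (up to the unit $x$, since $u_0 = 0$). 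Because $g_{u_0}$ and $g_{u_l}$ are nonzero, this recurrence has invertible leading and trailing coefficients, so it determines $\mbf{w}$ in both directions from any block of $d+1$ consecutive values, and the space of bi-infinite solutions is finite-dimensional (dimension $d+1$).

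The key extra ingredient is that $\mbf{w}$ takes only finitely many values, say it ranges in a finite set $S \subset \bb{C}$. Then each ``window'' $(\mbf{w}(k), \mbf{w}(k+1), \dots, \mbf{w}(k+d)) \in S^{d+1}$ takes only finitely many values as $k$ ranges over $\bb{Z}$. Since the recurrence (with invertible end coefficients) makes the window at $k+1$ a function of the window at $k$, and also the window at $k-1$ a function of the window at $k$, the two-sided orbit of windows under this bijection on its (finite) image must be eventually periodic in both directions, hence purely periodic: there is $p \geq 1$ with window$(k+p) = $ window$(k)$ for all $k$. In particular $\mbf{w}(k+p) = \mbf{w}(k)$ for all $k$, so $\mbf{w}$ is $p$-periodic.

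I expect the only mild technical point to be justifying the clean reduction to a recurrence with invertible leading/trailing coefficients — i.e. being careful that the ``canonical figure'' genuinely has a nonzero weight at the minimal coordinate (this is where one uses the definition of $F_{\cal{P}}$, which guarantees a point with coordinate $0$; if that point happened to have weight $0$ one shifts attention to the true support, which is harmless since a constant shift of the index does not affect periodicity). Everything after that is the standard pigeonhole argument on the finite set of windows: from two-sidedness and finiteness, the shift-by-one map on windows is an injection restricted to the orbit, the orbit is finite, hence a union of cycles, and connectivity of $\bb{Z}$ forces a single cycle. There is no serious obstacle; this is the one-dimensional warm-up whose purpose is to contrast with the genuinely hard two-dimensional case. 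One could alternatively phrase the whole thing via the state vector in $\bb{C}^{d+1}$ and the companion matrix $M$ of $(x-1)Poly_{\cal{P}}(x)$ over $\bb{C}$: $\mbf{w}$ being bounded forces its relevant coordinates to lie in a bounded $M$-invariant lattice-like set, again giving periodicity, but the finite-window pigeonhole is the cleanest.
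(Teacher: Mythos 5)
Your proof is correct and is essentially the paper's own argument in different clothing: both rest on the pigeonhole principle applied to length-$(d+1)$ windows together with the observation that the nonzero extreme weights of the canonical figure allow one to propagate equality of windows in both directions, yielding a period equal to the gap between two coinciding windows. The paper packages the propagation step as a difference word $\mbf{s}(k)=\mbf{w}(k+n)-\mbf{w}(k+m)$ whose minimal nonzero position would force an extreme weight to vanish, while you package it as two-sided determinism of the linear recurrence with characteristic polynomial $(x-1)\,Poly_{\cal{P}}(x)$; the content is the same, and the caveat you raise about a possibly zero weight at coordinate $0$ of the canonical figure applies equally (and equally harmlessly) to the paper's version.
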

		
\begin{proof}
Let $d+1$ be the diameter of $\cal{P}$. We claim that there exist two numbers $m, n$ such that $\mbf{w}(n + i) = \mbf{w}(m+ i)$ for each $0 \leq i \leq d$. Indeed, $p_{\mbf{w}}(d+1) \leq |A|^{d+1}$, i.e., there exists only $|A|^{d+1}$ distinct factors of length $d+1$, hence there must be two (and in fact infinitely many) equal factors of length $d+1$. Consider the word $\mbf{s}$ defined by $\mbf{s}(k) = \mbf{w}(k + n) - \mbf{w}(k + m)$. Let us prove that $\mbf{s}$ is an all-$0$ word. Assume the converse, then there exists $i$ such that $\mbf{s}(i) \ne 0$. We let $r$ denote the minimal (by absolute value) position such that $\mbf{s}(r) \ne 0$. Suppose $r < 0$; then 	$\sum\limits_{(x, g_x) \in F_{\cal{P}}} \mbf{s}(x + r) g_x = 0$, where $F_{\cal{P}}$ is the canonical figure of the pattern $\cal{P}$. By assumption, we obtain $\mbf{s}(r) g_0 = 0$ and $\mbf{s}(r) \ne 0$. Hence $g_0=0$; a contradiction. So, $\mbf{s}$ is an all-$0$ word and $\mbf{w}(k + n) = \mbf{w}(k + m)$ for each $k \in \bb{Z}$, i.e., $\mbf{w}$ is $|n - m|$-periodic.  
	\end{proof}
	

A similar statement for words with pattern abelian complexity 1 is a direct corollary from the above proposition:
	\begin{corollary} \label{cor:always_per_words}
	Let $\cal{P}$ be a pattern with weights and $\mbf{w}$ be a word such that $a_{\mbf{w}}(\cal{P})=1$. Then $\mbf{w}$ is periodic. 
	\end{corollary}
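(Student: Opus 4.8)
The plan is to deduce the corollary from Proposition~\ref{lem:always_per} by encoding the letters of the alphabet as suitably generic complex numbers. Write $A = \{a_1, \ldots, a_k\}$, and recall that a $\cal{P}$-linear combination of $\mbf{w}$ over a figure $F^w = \{(u_1,g_1),\ldots,(u_l,g_l)\} \in \cal{P}$ is the element $\sum_{i=1}^l \mbf{w}(u_i) g_i$ of $\bb{Z}[a_1,\ldots,a_k]$, which after collecting terms equals $\sum_{j=1}^k c_j(F^w)\, a_j$ with integer coefficients $c_j(F^w) = \sum_{i \,:\, \mbf{w}(u_i) = a_j} g_i$. The hypothesis $a_{\mbf{w}}(\cal{P}) = 1$ says precisely that the coefficient vector $(c_1(F^w), \ldots, c_k(F^w))$ is the same for every figure $F^w \in \cal{P}$.

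First I would fix an injection $\phi : A \to \bb{C}$ whose image is linearly independent over $\bb{Q}$; for instance $\phi(a_j) = \alpha^{\,j-1}$ for a fixed transcendental number $\alpha$ (note that $\bb{Q}$-linear independence of the image automatically forces $\phi$ to be injective). Consider the word $\mbf{w}' = \phi \circ \mbf{w}$, which takes values in the finite subset $\phi(A) \subset \bb{C}$. For any figure $F^w \in \cal{P}$ one has $\sum_{i=1}^l \mbf{w}'(u_i) g_i = \sum_{j=1}^k c_j(F^w)\, \phi(a_j)$. Since the $c_j$ are integers and the $\phi(a_j)$ are $\bb{Q}$-linearly independent, two such complex sums are equal if and only if the corresponding integer coefficient vectors coincide. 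Combined with the previous paragraph, this shows that $\sum_{i} \mbf{w}'(u_i) g_i$ takes one and the same value $C \in \bb{C}$ over all $F^w \in \cal{P}$.

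Then I would invoke Proposition~\ref{lem:always_per} for the word $\mbf{w}'$ (whose alphabet is a finite subset of $\bb{C}$) and the pattern with weights $\cal{P}$: since $\mbf{w}'$ has constant sums over $\cal{P}$, it is periodic, say $p$-periodic. Because $\phi$ is injective, the identity $\mbf{w}'(i+p) = \mbf{w}'(i)$ for all $i$ forces $\mbf{w}(i+p) = \mbf{w}(i)$ for all $i$, so $\mbf{w}$ is $p$-periodic, as required.

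I do not expect a genuine obstacle here — the statement is, as advertised, a direct corollary. The only point requiring a little care is the choice of the encoding $\phi$: it must have $\bb{Q}$-linearly independent image so that equality of abelian $\cal{P}$-classes in $\bb{Z}[a_1,\ldots,a_k]$ is faithfully detected by equality of the complex-valued weighted sums (this is what handles the weights being arbitrary, possibly negative, integers), and it must be injective so that periodicity of $\mbf{w}'$ pulls back to periodicity of $\mbf{w}$. Any transcendence-based choice, or distinct square roots of distinct primes, achieves both.
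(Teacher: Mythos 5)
Your proof is correct and follows the same route the paper intends: the paper simply declares the corollary a direct consequence of Proposition~\ref{lem:always_per} after identifying letters with complex numbers, which is exactly your reduction. Your insistence on a $\bb{Q}$-linearly independent encoding is harmless but not needed here, since only the forward implication (equal coefficient vectors $\Rightarrow$ equal complex sums) is used; any injection $\phi:A\to\bb{C}$ already suffices, as injectivity alone is what pulls the period of $\mbf{w}'$ back to $\mbf{w}$.
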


			We recall that the $n$'th cyclotomic polynomial is defined by 
	\[ \Phi_n(x) = \prod_{1 \leq k \leq n; (k, n) = 1} (x - e^{2\pi i \frac k n}).\]

The following theorem gives a necessary and sufficient condition for existence of a non-trivial word (i.e., consisting not only of $0$'s) with constant $\cal{P}$-sum: 
	
	\begin{theorem} \label{th:1d}
		Let $\cal{P}$ be a pattern with weights, and let the alphabet $A$ be a finite subset of complex numbers. 
		Then there exists a non-zero word $\mbf{w}$ such that
		\begin{equation}\label{eq:fix_sum}
			\sum\limits_{x \in F} \mbf{w}(x)g_x = 0
		\mbox{ for all } F \in \cal{P} 
		\end{equation}
	 if and only if there exists  $n$ such that $Poly_{\cal{P}}(x)$ is divisible by $\Phi_n(x)$  in the ring $\bb{Q}[x]$.
	\end{theorem}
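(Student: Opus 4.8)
The plan is to reduce the existence of a non-zero word $\mbf{w}:\bb{Z}\to\bb{C}$ satisfying \eqref{eq:fix_sum} to a statement about a linear recurrence, and then to identify exactly when that recurrence admits a bounded (indeed finitely-valued) solution. Write $F_{\cal{P}}$ for the canonical figure, $P(x)=Poly_{\cal{P}}(x)=\sum_{i=0}^d g_i x^i$ with $g_0\neq 0$ and $g_d\neq 0$ (by the choice of canonical figure and the definition of the diameter). The condition $\sum_{x\in F}\mbf{w}(x)g_x=0$ for every translate $F$ of $F_{\cal{P}}$ is precisely the condition that the bi-infinite sequence $(\mbf{w}(k))_{k\in\bb{Z}}$ is annihilated by the linear operator $P(S)$, where $S$ is the shift $(S\mbf{w})(k)=\mbf{w}(k+1)$; that is, $\sum_{i=0}^d g_i\,\mbf{w}(k+i)=0$ for all $k\in\bb{Z}$. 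Since $g_0\neq 0$ and $g_d\neq 0$, this recurrence can be solved in both directions, so the solution space over $\bb{C}$ is exactly $d$-dimensional and is spanned by the sequences $k\mapsto k^j\zeta^k$ where $\zeta$ ranges over the roots of $P$ and $j$ over the multiplicities.

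\textbf{Direction ($\Leftarrow$).} Suppose $\Phi_n(x)\mid P(x)$ in $\bb{Q}[x]$ for some $n$. Then every primitive $n$-th root of unity $\zeta$ is a root of $P$, so the geometric sequence $\mbf{w}(k)=\zeta^k$ is a solution of the recurrence; it is non-zero, it takes only the finitely many values $\{1,\zeta,\dots,\zeta^{n-1}\}$, and (being $n$-periodic) is a legitimate word over a finite subset of $\bb{C}$. This exhibits the desired $\mbf{w}$.

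\textbf{Direction ($\Rightarrow$).} Suppose a non-zero finitely-valued $\mbf{w}$ satisfies \eqref{eq:fix_sum}. By Proposition~\ref{lem:always_per} (or its proof), $\mbf{w}$ is periodic, say $N$-periodic. A non-zero $N$-periodic sequence annihilated by $P(S)$ must, as an element of the solution space, be a combination of the basis sequences $k^j\zeta^k$; the only ones that are periodic are those with $\zeta$ a root of unity and $j=0$, so at least one root of $P$ is a root of unity, say a primitive $n$-th root $\zeta$. Since $P\in\bb{Q}[x]$, its minimal polynomial over $\bb{Q}$ — namely $\Phi_n(x)$ — divides $P(x)$ in $\bb{Q}[x]$, as required. (Alternatively one can argue more hands-on: periodicity of $\mbf{w}$ means $x^N-1$ lies in the annihilator ideal of $(\mbf{w}(k))$; combined with $P(x)$ lying there too, and the fact that over $\bb{Q}[x]$ the annihilator is generated by a single polynomial $Q(x)\mid \gcd(P(x),x^N-1)$ with $Q\neq x^N-1$ forced by $\mbf{w}\neq 0$, one gets a nontrivial common cyclotomic factor $\Phi_n\mid P$.)

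\textbf{Main obstacle.} The routine part is the correspondence between \eqref{eq:fix_sum} and the recurrence $P(S)\mbf{w}=0$ and the classical description of its solution space. The one point requiring care is the passage from "$\mbf{w}$ is finitely-valued and non-zero" to "$P$ has a cyclotomic factor over $\bb{Q}$": one must rule out, e.g., non-periodic bounded solutions (handled by Proposition~\ref{lem:always_per}, which forces genuine periodicity rather than mere boundedness) and then make the descent from "$P$ has a root of unity as a zero" to "$\Phi_n \mid P$ in $\bb{Q}[x]$" using that $\Phi_n$ is the minimal polynomial of a primitive $n$-th root of unity over $\bb{Q}$ and is irreducible there. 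I would present the argument through the annihilator ideal in $\bb{Q}[x]$, since that makes the rationality bookkeeping transparent and simultaneously yields both implications.
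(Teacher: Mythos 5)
Your proof is correct, and your ``if'' direction (taking $\mbf{w}(k)=\zeta^k$ for a primitive $n$-th root of unity $\zeta$) is exactly the paper's construction. Where you genuinely diverge is the ``only if'' direction. The paper, after invoking Proposition~\ref{lem:always_per} to get an $N$-periodic word just as you do, argues via a Bezout identity: if $Poly_{\cal{P}}(x)$ and $x^N-1$ were coprime in $\bb{Q}[x]$, one could write $Poly_{\cal{P}}(x)Z_1(x)+(x^N-1)Z_2(x)=C$ with $Z_1,Z_2\in\bb{Z}[x]$ and $C$ a non-zero integer, and Lemma~\ref{lem:lin_comb} (polynomial combinations of patterns still annihilate $\mbf{w}$) then forces $C\,\mbf{w}(i)=0$ for every $i$, contradicting $\mbf{w}\neq 0$; a nontrivial common factor with $x^N-1=\prod_{d\mid N}\Phi_d$ must be divisible by some $\Phi_d$. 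You instead go through the structure theorem for bi-infinite solutions of $P(S)\mbf{w}=0$ (the exponential-polynomial basis $k^j\zeta^k$), observe that a non-zero periodic solution forces a root of unity among the roots of $P$, and finish with the irreducibility of $\Phi_n$ over $\bb{Q}$. The two arguments encode the same fact about the annihilator ideal of $\mbf{w}$ in $\bb{Q}[x]$ --- indeed your parenthetical ``hands-on'' variant essentially \emph{is} the paper's proof --- but the paper's version is more self-contained (it needs only the near-trivial Lemma~\ref{lem:lin_comb}, not the solution-space description), while yours is conceptually more transparent at the cost of importing the classification of recurrence solutions. That classification quietly uses $g_0\neq 0$ and $g_d\neq 0$; since the definitions permit points of weight $0$ in a figure, the canonical figure does not by itself guarantee $g_0\neq 0$, so you should add one sentence factoring out the harmless power of $x$ (which is not divisible by any $\Phi_n$). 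This is a normalization, not a gap.
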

		
	To prove Theorem \ref{th:1d}, we need an auxiliary lemma:	
	
	\begin{lemma} \label{lem:lin_comb}
		Let $\cal{P}_1, \cal{P}_2, \ldots, \cal{P}_k$ be patterns with weights and $Z_1(x), Z_2(x), \ldots, Z_k(x)$ be some polynomials over $\bb{Z}[x]$. 		Suppose that
			$$\sum\limits_{(u, g_u) \in F} g_u \mbf{w}(u) = 0$$
		{for each $i$} and each $F \in \cal{P}_i$. Then  for the polynomial $P (x) = \sum_{i =1}^k Z_i(x) Poly_{\cal{P}_i}(x)$ we have   
			\[ \sum_{(u, g_u) \in Fig_{P}} g_u \mbf{w}(u)= 0.\]
		\end{lemma}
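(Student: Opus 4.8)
\textbf{Proof proposal for Lemma \ref{lem:lin_comb}.}

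The plan is to reduce everything to the single-pattern, single-monomial case and then assemble the general statement by linearity. First I would observe that the statement is additive in $P(x)$: if the conclusion holds for polynomials $P_1(x)$ and $P_2(x)$ obtained from patterns and polynomial multipliers as in the hypothesis, then it holds for $P_1(x)+P_2(x)$, because $\sum_{(u,g_u)\in Fig_{P_1+P_2}} g_u\mbf{w}(u) = \sum_{(u,g_u)\in Fig_{P_1}} g_u\mbf{w}(u) + \sum_{(u,g_u)\in Fig_{P_2}} g_u\mbf{w}(u)$ (the coefficients of $P_1+P_2$ are the sums of the corresponding coefficients, and the pairing with $\mbf{w}$ is linear in the coefficient). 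Hence it suffices to treat a single term $Z_i(x)Poly_{\cal{P}_i}(x)$, and then, writing $Z_i(x) = \sum_j c_j x^j$ and using additivity again, a single monomial multiplier $c_j x^j$ times $Poly_{\cal{P}_i}(x)$.

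So the core claim to establish is: if $\sum_{(u,g_u)\in F} g_u\mbf{w}(u) = 0$ for every $F\in\cal{P}_i$, then for $P(x) = c\,x^j\,Poly_{\cal{P}_i}(x)$ one has $\sum_{(u,g_u)\in Fig_{P}} g_u\mbf{w}(u) = 0$. Here I would unwind the definitions: $Poly_{\cal{P}_i}(x) = \sum_{(t,g_t)\in F_{\cal{P}_i}} g_t x^t$, so $c\,x^j\,Poly_{\cal{P}_i}(x) = \sum_{(t,g_t)\in F_{\cal{P}_i}} (c\,g_t) x^{t+j}$, whose figure is exactly the set of pairs $(t+j,\,c\,g_t)$ as $(t,g_t)$ ranges over $F_{\cal{P}_i}$ — that is, the figure with weights obtained from $F_{\cal{P}_i}$ by translating every point by $j$ and scaling every weight by $c$. (One should note that no cancellation of monomials occurs in this step, since multiplication by a monomial is injective on monomials; if $c=0$ the statement is trivial.) The translated-and-scaled figure is, up to the overall factor $c$, an element of the pattern $\cal{P}_i$ (the translate of $F_{\cal{P}_i}$ by the vector $j$). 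Therefore $\sum_{(u,g_u)\in Fig_{P}} g_u\mbf{w}(u) = c\sum_{(t,g_t)\in F_{\cal{P}_i}} g_t\,\mbf{w}(t+j) = c\cdot 0 = 0$ by the hypothesis applied to the figure $F = F_{\cal{P}_i}+j \in \cal{P}_i$.

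The main thing to be careful about — the only real obstacle — is the bookkeeping when several monomials in the product $Z_i(x)Poly_{\cal{P}_i}(x)$, or several of the products $Z_i(x)Poly_{\cal{P}_i}(x)$ for different $i$, land on the \emph{same} lattice point $u$: then the coefficient $g_u$ of $x^u$ in $P(x)$ is the \emph{sum} of the individual contributions, and one must check that the quantity $\sum_u g_u\mbf{w}(u)$ is genuinely computed from these summed coefficients rather than from the individual terms. This is exactly why I phrased the argument through additivity of both sides in the coefficient vector of $P$: the map sending a polynomial $Q(x)=\sum_u q_u x^u$ to the scalar $\sum_u q_u\mbf{w}(u)$ is a well-defined $\bb{Q}$-linear (indeed $\bb{Z}$-linear) functional on $\bb{Q}[x]$, so it does not matter how we split $P$ into pieces — decompose $P = \sum_i\sum_j c_{i,j}x^j Poly_{\cal{P}_i}(x)$, apply the functional, and conclude $\sum_u g_u\mbf{w}(u) = \sum_i\sum_j c_{i,j}\cdot 0 = 0$. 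No estimate or deep input is needed; the content is purely the observation that ``evaluate against $\mbf{w}$'' is a linear functional and that multiplying a pattern's polynomial by a monomial corresponds to translating its figure.
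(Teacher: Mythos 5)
Your proposal is correct and follows essentially the same route as the paper's own (very brief) argument: the figure of $x^j\,Poly_{\cal{P}_i}(x)$ is a translate of the canonical figure, hence lies in $\cal{P}_i$, and the evaluation $Q\mapsto\sum_{(u,q_u)\in Fig_Q} q_u\mbf{w}(u)$ is additive in the coefficients. Your version is merely more careful about the bookkeeping when monomials from different terms coincide, a point the paper treats as immediate.
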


	This lemma is straightforward since the figure $Fig_{x^l Poly_{\cal{P}}(x)} $ is a translation of the characteristic figure of a pattern $\cal{P}$, and 
	$$
		\sum\limits_{(u, g_u) \in F_{\cal{P}}} g_u \mbf{w}(u) + 
		\sum\limits_{(u, g_u) \in F_{\cal{P}'}} g_u \mbf{w}(u) = 
		\sum\limits_{(u, g_u) \in Fig_{Poly_{\cal{P}} + Poly_{\cal{P}'}}} g_u \mbf{w}(u),
	$$
	where $F_{\cal{P}}$ and $F_{\cal{P}'}$ are canonical figures of patterns $\cal{P}$ and $\cal{P}'$.
	
	
	\begin{proof}[Proof of Theorem \ref{th:1d}]
		First, we prove that if there exist a non-zero word with sums in $\cal{P}$-factors equal to 0, then $Poly_{\cal{P}}$ is divisible by some cyclotomic polynomial.
		
		Let $\mbf{w}$ be a word such that $\sum\limits_{x \in F} \mbf{w}(x) g_x = 0$ for each $F \in \cal{P}$. By Proposition \ref{lem:always_per}, the word $\mbf{w}$ is periodic. Suppose that $n$ is the period of $\mbf{w}$ and that $Poly_{\cal{P}}(x)$ and $x^n - 1$ are coprime in the ring $\bb{Q}[x]$. 
		Then there exist polynomials $Q_1 (x), Q_2 (x) \in \bb{Q}[x]$ such that 
		\[Poly_{\cal{P}}(x) Q_1 (x) +  (x^n -1) Q_2 (x) = 1.\] 
		Then there exist $Z_1(x), Z_2(x) \in \bb{Z}[x]$ such that 
		\[Poly_{\cal{P}}(x) Z_1(x) + Z_2(x) (x^n -1) = C\]
		for some integer $C\neq 0$. 
		Since $n$ is the period of the word $\mbf{w}$, we have that 
		\[\sum_{(u , g_u) \in F} g_u \mbf{w}(u) = 0\]  
		for each $F$ in the pattern defined by $x^n -1$.
		From Lemma \ref{lem:lin_comb} it follows that 
		\[ \sum_{(u , g_u) \in Fig_{Poly_{\cal{P}}(x) Z_1(x) + Z_2(x) (x^n -1) }} g_u \mbf{w}(u) = 0,\] 
		i.e., $C\mbf{w}(0) = 0$  and so $\mbf{w}(0) = 0$. With the same reasoning, we get that  $\mbf{w}(i) = 0$ for all $i$. 
		
		So, we proved that $x^n -1$ and  $Poly_{\cal{P}}$ are not coprime. Then $Poly_{\cal{P}}$ is divisible by $\Phi_d$ for some $d$ such that $n$ is divisible by $d$.
		
Now, let us construct for each pattern $\cal{P}$ with a polynomial  divisible by $\Phi_n$ a non-zero function for which its sums in all $\cal{P}$-factors are equal to 0. 
Suppose first that we have an equality: $Poly_{\cal{P}}=\Phi_n$.  
Let us construct a word $\mbf{s}$ such that $\sum\limits_{(x, g_x) \in F} \mbf{s}(x)g_x = 0$ for each figure $F \in \cal{P}$. We set $\mbf{s}(x) = e^{\frac {2\pi i x}  n}$. Then, expressing $F$ as $v+F_{\cal{P}}$, we get 
		$$\sum_{(x, g_x) \in F_{\cal{P}}} g_x\mbf{s}(x + v) = e^{\frac {2 \pi i v} n} \Phi_n\left(e^{\frac {2\pi i}  n}\right) = 0.$$
		So, the ``only if'' part is proved for $Poly_{\cal{P}}=\Phi_n$. 
		Now, consider the general case of a pattern $\cal P$ such that $\Phi_n$ divides $Poly_{\cal{P}}$. By Lemma~\ref{lem:lin_comb}, the above equiality holds true also for this case, i.e., 
		$$\sum_{(x, g_x) \in F_{\cal{P}}} g_x\mbf{s}(x + v) = 0.$$
		This concludes the proof.

		
	\end{proof}

As a corollary, we get a necessary condition for a pattern to be abelian rigid:	
	\begin{corollary} If for a one-dimensional pattern $\cal{P}$ with weights there exists a non-unary word with pattern abelian complexity over $\cal{P}$ equal to $1$, then  there exists  $n$ such that $Poly_{\cal{P}}(x)$ is divisible by $\Phi_n(x)$  in the ring $\bb{Q}[x]$.
	\end{corollary}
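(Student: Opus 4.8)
The plan is to derive this corollary directly from Theorem~\ref{th:1d}, by reducing the statement about abelian pattern complexity of a genuinely multi-letter word to the statement about complex-valued words with vanishing $\cal{P}$-sums, via the indicator function of a single letter.

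First I would fix a non-unary word $\mbf{w}$ with $a_{\mbf{w}}(\cal{P}) = 1$ and choose a letter $a$ of the alphabet $A = \{a_1,\dots,a_k\}$ that occurs in $\mbf{w}$; since $\mbf{w}$ is non-unary, some other letter occurs as well, so the indicator word $\chi$ defined by $\chi(i) = 1$ if $\mbf{w}(i) = a$ and $\chi(i) = 0$ otherwise takes both values $0$ and $1$, hence is non-constant. The next step is to observe that $a_{\mbf{w}}(\cal{P}) = 1$ forces $\chi$ to have constant $\cal{P}$-sums. Indeed, by definition of the abelian pattern complexity, all the $\cal{P}$-linear combinations $\sum_{(u,g_u)\in F} g_u\,\mbf{w}(u) \in \bb{Z}[a_1,\dots,a_k]$ (over $F \in \cal{P}$) coincide as polynomials, so in particular their coefficients in front of the variable $a$ agree; that coefficient of $a$ in the combination over $F$ is exactly $\sum_{(u,g_u)\in F} g_u\,\chi(u)$. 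Hence there is a constant $C$ with $\sum_{(u,g_u)\in F} g_u\,\chi(u) = C$ for every $F \in \cal{P}$.

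Then I would pass from constant sums to zero sums by a first difference: set $\mbf{s}(i) = \chi(i+1) - \chi(i)$. Writing an arbitrary figure of $\cal{P}$ as $v + F_{\cal{P}}$ with $F_{\cal{P}}$ the canonical figure, we get $\sum_{(u,g_u)\in F_{\cal{P}}} g_u\,\mbf{s}(u+v) = C - C = 0$, so $\mbf{s}$ satisfies \eqref{eq:fix_sum}. Moreover $\mbf{s}$ is a non-zero word over the alphabet $\{-1,0,1\}$, which is a finite subset of $\bb{C}$: since $\chi$ is non-constant and takes values in $\{0,1\}$, some adjacent pair $\chi(i),\chi(i+1)$ differs, giving a nonzero value of $\mbf{s}$. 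Applying Theorem~\ref{th:1d} to the pattern $\cal{P}$ and the word $\mbf{s}$ yields an integer $n$ such that $\Phi_n(x)$ divides $Poly_{\cal{P}}(x)$ in $\bb{Q}[x]$, which is the claim.

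The argument is routine once Theorem~\ref{th:1d} is in hand; the only point requiring a little care is the bookkeeping in the second step — checking that ``abelian pattern complexity equal to $1$'' really does pin down, letter by letter, the weighted count $\sum g_u\,\chi(u)$, and that the choice of $a$ together with the difference operation produces a word $\mbf{s}$ that is not identically zero. This last point is exactly where the hypothesis that $\mbf{w}$ is non-unary (and not merely non-constant or non-zero in some weaker sense) enters.
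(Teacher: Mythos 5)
Your proof is correct and follows the route the paper intends: the corollary is stated there as an immediate consequence of Theorem~\ref{th:1d}, and your reduction (reading off the coefficient of a single letter from the constant $\cal{P}$-linear combination to get an indicator word with constant weighted sums, then taking a first difference to obtain a non-zero word with zero sums) is exactly the bookkeeping the paper leaves implicit. No gaps.
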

	
\begin{remark}	This condition is not sufficient for existence of a non-unary word with pattern abelian complexity over $\cal{P}$ equal to $1$ though. For example, for a pattern corresponding $\Phi_6=x^2-x+1$ there is no word with abelian complexity equal to 1 over this pattern. To see it, it is enough to check words up to length $4$.
\end{remark}	


	\section{Pattern abelian complexity of two-dimensional words by convex patterns} \label{section:2d} 
	
	Throughout this section $\bf{w}$ denotes a two-dimensional word.
	
	\begin{definition} \label{def:convex_pattern}
		A figure  $F$ (without weights) is called \emph{convex} if 
		$F = conv_{\bb{R}^2}(F) \cap \bb{Z}^2$, where $conv_{\bb{R}^2}(F)$ is convex hull of the set $F$ in two-dimensional Euclidean space.
				A pattern $\cal{P}$ is called \emph{convex} if all figures of this pattern are convex. 
	\end{definition}	
	Clearly, the definition is correct, since all figures of a pattern are translations of one another, so if one of them is convex, then all of them are.
		
	

	
	 Let $v \in \bb{Z}^2$ be a vector and $n$ be a natural number. The polynomial $l(v, n) = \sum_{i = 0}^n x^{iv}$ is called \emph{strongly linear}. Now we can state our main result.
	
	\begin{theorem} \label{th:main_convex}
		Let $\cal{P}$ be a convex pattern and $F$ be a figure of this pattern. Then $char(F)$ has a non-trivial strongly linear divisor if and only if $\cal{P}$ is not abelian rigid. 
	\end{theorem}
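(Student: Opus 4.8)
The overall plan is to prove the two implications separately. Write $P:=char(F)$ and regard a configuration $\mbf{c}\colon\bb{Z}^2\to\bb{C}$ as an element of a module over $\bb{C}[x_1^{\pm},x_2^{\pm}]$, where $g=\sum_u g_u x^u$ acts by $(g\cdot\mbf{c})(y)=\sum_u g_u\,\mbf{c}(y+u)$. A configuration has constant $\cal{P}$-sum with value $C$ precisely when $P\cdot\mbf{c}=C\,\mathbf{1}$, with $\mathbf{1}$ the all-ones configuration. Since $a_{\mbf{w}}(\cal{P})=1$ forces each indicator configuration $\mathbf{1}_a$ of $\{x:\mbf{w}(x)=a\}$ to have constant $\cal{P}$-sum, and a full-rank lattice of common periods of the $\mathbf{1}_a$ is a full-rank period lattice of $\mbf{w}$, for the implication ``not abelian rigid $\Rightarrow$ strongly linear divisor'' it is enough to treat finite-valued $\mbf{c}$; subtracting the constant $C/|F|$ we may assume $P\cdot\mbf{c}=0$, and the full periodicity of $\mbf{c}$ is equivalent to that of $\mbf{w}$. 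So the statement becomes: $P$ has a non-trivial strongly linear divisor if and only if there is a finite-valued, not fully periodic $\mbf{c}$ with $P\cdot\mbf{c}=0$ (and in the ``if'' direction I will exhibit an actual word with $a_{\mbf{w}}(\cal{P})=1$).

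For the construction direction, suppose $l(v,n)\mid P$ with $n\ge 1$; put $Q:=P/l(v,n)$, a Laurent polynomial, and $m:=n+1$. Assume first $v$ is primitive, fix a basis $\{v,\xi\}$ of $\bb{Z}^2$, and write each $x\in\bb{Z}^2$ uniquely as $x=j(x)v+b(x)\xi$. I look for $\mbf{w}$ of the form $\mbf{w}(x)=\psi_{j(x)\bmod m}(b(x))$ for one-dimensional words $\psi_0,\dots,\psi_{m-1}$ over a finite alphabet; this $\mbf{w}$ is $mv$-periodic, and since $x,x+v,\dots,x+nv$ sweep through all residues mod $m$ in the first coordinate, $(l(v,n)\cdot\mathbf{1}_a)(x)=\#\{r\in\bb{Z}/m:\psi_r(b(x))=a\}$ for every letter $a$. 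If the $\psi_r$ are chosen so that the multiset $\{\psi_0(\beta),\dots,\psi_{m-1}(\beta)\}$ is the same fixed multiset $M$ for every $\beta$ --- for instance take $(\psi_0(\beta),\dots,\psi_{m-1}(\beta))$ to be a permutation of $M=\{a_1,\dots,a_m\}$ for each $\beta$ --- then each $l(v,n)\cdot\mathbf{1}_a$ is a constant configuration, hence so is $P\cdot\mathbf{1}_a=Q\cdot(l(v,n)\cdot\mathbf{1}_a)$, so every $F$-factor of $\mbf{w}$ has the same number of each letter, i.e.\ $a_{\mbf{w}}(\cal{P})=1$. Choosing the sequence $\beta\mapsto(\text{the permutation used at }\beta)$ to be aperiodic makes $\mbf{w}$ not fully periodic, which is a routine check since any period of $\mbf{w}$ transverse to $v$ would force that sequence of permutations to be periodic. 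For non-primitive $v=gv_0$ one argues the same way using residues mod $mg$ along $v_0$ and letting $\psi_r$ depend only on $\lfloor r/g\rfloor$.

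For the rigidity direction, assume $P$ has no non-trivial strongly linear divisor and let $\mbf{c}$ be finite-valued with $P\cdot\mbf{c}=0$; I must show $\mbf{c}$ is fully periodic. The annihilator $I=\{g:g\cdot\mbf{c}=0\}$ is a nonzero ideal containing $P$, so by the structure theorem for configurations with a nonzero annihilator (Kari and Szabados, \cite{KS2015}) $\mbf{c}$ is a finite sum of configurations each periodic in some direction, whence $I$ contains a product of difference polynomials $G=\prod_i(x^{m_i}-1)$ with all $m_i\ne 0$. Distinguish two cases according to $\gcd(P,G)$ in the UFD $\bb{C}[x_1^{\pm},x_2^{\pm}]$. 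If $\gcd(P,G)=1$, then $V(P)\cap V(G)$ is finite and hence so is $V(I)$; taking Fourier transforms, $\widehat{\mbf{c}}$ is a distribution on the torus supported on the finite set $V(I)\cap\bb{T}^2$, so, boundedness of $\mbf{c}$ ruling out derivative-of-Dirac contributions, $\mbf{c}$ is a trigonometric polynomial, and a finite-valued trigonometric polynomial on $\bb{Z}^2$ has all frequencies at roots of unity, hence is fully periodic. If $P$ and $G$ have a common factor, then --- every irreducible factor of a difference polynomial $x^m-1$ in the Laurent ring being of the form $x^{\bar v}-\zeta$ with $\bar v$ primitive and $\zeta$ a root of unity --- $P$ is divisible by some $x^{\bar v}-\zeta$, and I claim that for convex $F$ this forces a strongly linear divisor of $P$, contradicting the hypothesis. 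Hence only the first case occurs and $\mbf{c}$ is fully periodic.

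What remains is the claim, which is where convexity of $F$ is essential: if $F$ is convex and $P=char(F)$ is divisible by $x^{\bar v}-\lambda$ for primitive $\bar v$ and $\lambda\in\bb{C}\setminus\{0\}$, then $\lambda$ is a primitive $d$-th root of unity for some $d\ge 2$ and $l(\bar v,d-1)\mid P$. I would prove it by a unimodular change of variables taking $\bar v$ to $(1,0)$: this replaces $F$ by another convex figure $\widetilde F$ and the hypothesis by $s-\lambda\mid\widetilde P(s,t)$, $s$ being the first new variable. Writing $\widetilde P(s,t)=\sum_j t^j\,s^{a_j}(1+s+\dots+s^{w_j})$, where the $j$-th row $\{i:(i,j)\in\widetilde F\}$ is the interval $[a_j,a_j+w_j]$ by convexity, vanishing at $s=\lambda$ must occur in each row, so $1+\lambda+\dots+\lambda^{w_j}=0$ for every $j$; hence $\lambda\ne1$ has finite order $d\ge2$ and $d\mid w_j+1$ for all $j$, so $1+s+\dots+s^{d-1}$ divides each $1+s+\dots+s^{w_j}$, therefore divides $\widetilde P(s,t)$, and $l(\bar v,d-1)\mid P$. (As a byproduct this shows that for convex $F$ having a line-polynomial factor is the same as having a strongly linear divisor, which is what makes the two cases above exhaustive.) I expect the main obstacle to be the rigidity direction: pinning down and adapting the Kari--Szabados annihilator structure for finite-valued configurations to get the product $G$, together with the harmonic-analysis step converting a zero-dimensional annihilator variety into full periodicity; the convexity lemma, though the conceptual heart, is short once the unimodular reduction is made, and the construction direction is elementary apart from the non-primitive bookkeeping.
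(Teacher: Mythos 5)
Your proposal is correct in outline, and while the construction direction coincides with the paper's (Theorem \ref{th:has_divisor_then_not_peridic} builds essentially the same word: constant $l(v,n)$-sums along $v$, aperiodicity injected transversally, with a prime-indicator sequence playing the role of your aperiodic sequence of permutations), the rigidity direction follows a genuinely different route. The paper's proof is purely combinatorial: Lemma \ref{lem:step_of_ext} and Proposition \ref{lem:full_ext} show that a sufficiently large polygon parallel to the convex hull of $F$ on which a zero-sum configuration vanishes can be grown one lattice line at a time --- the condition $\gcd(r_0-l_0,\ldots,r_n-l_n)=1$ on the row lengths is exactly what forces each new line to vanish --- and Theorem \ref{th:sum_then_per} then extracts two independent periods by pigeonhole. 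You instead place $G=\prod_i(x^{m_i}-1)$ in the annihilator ideal via the Kari--Szabados theorem \cite{KS2015}, split on $\gcd(P,G)$, treat the coprime case by Fourier support on a finite subset of the torus together with the closed-subgroup argument for finite-valued trigonometric sums, and kill the non-coprime case with your row-length lemma: if $x^{\bar v}-\lambda$ divides $char(F)$ for convex $F$, then after a unimodular change of basis every row is an integer interval, each row polynomial $1+s+\cdots+s^{w_j}$ must vanish at $\lambda$, so $\mathrm{ord}(\lambda)=d\ge 2$ divides every $w_j+1$ and $l(\bar v,d-1)$ divides $char(F)$. That lemma is correct and is in fact the cleanest justification of the paper's (merely asserted) equivalence between ``relatively prime'' and ``no strongly linear divisor''; it isolates exactly where convexity enters. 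The trade-off: the paper's argument is self-contained and elementary with explicit bounds $N(|A|,\cal{P})$, whereas yours is shorter modulo \cite{KS2015} and standard harmonic analysis, is precisely the kind of algebraic proof the authors request in their conclusion, and looks more amenable to weights and to higher dimensions. If you write it up, spell out the two analytic steps you currently wave at: that a bounded sequence whose Fourier transform has finite support is a finite trigonometric sum (derivative-of-delta terms produce polynomial growth), and that a finite-valued finite trigonometric sum on $\bb{Z}^2$ is fully periodic (a closed subgroup of $\bb{T}^N$ is a subtorus times a finite group, and a continuous finite-valued function is constant on connected components).
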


In the next section, we give an alternative statement of the theorem in geometric form (see Theorem \ref{th:41}').

	
	One direction of the statement is easy, and we prove it right away in a stronger form (Theorem \ref{th:has_divisor_then_not_peridic}). For the other direction, we would need several additional lemmas and definitions, so we prove it in the consequent subsections. The other direction is also stated in a stronger form (see Theorem \ref{th:sum_then_per})

The following theorem is a stronger version of the only if part of Theorem \ref{th:main_convex} for patterns with weights:

	\begin{theorem} \label{th:has_divisor_then_not_peridic}
	Let $\cal{P}$ be a pattern with weights. 
	If $Poly_{\cal{P}}$ has a non-trivial strongly linear divisor, then $\cal{P}$ is not abelian rigid. 
	\end{theorem}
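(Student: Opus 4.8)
The plan is to exhibit explicitly a non-fully-periodic function $\mbf{w}$ with constant $\cal{P}$-sums, using the strongly linear divisor to produce the "degenerate direction" along which the function fails to be periodic. Suppose $Poly_{\cal{P}}(x_1,x_2)$ is divisible by $l(v,n) = \sum_{i=0}^n x^{iv}$ for some primitive $v = (v_1,v_2) \in \bb{Z}^2$ and some $n \geq 1$. First I would change coordinates: pick $u \in \bb{Z}^2$ with $\det(u,v) = \pm 1$, so that the linear map sending $v \mapsto (0,1)$ (say) and $u \mapsto (1,0)$ is a bijection of $\bb{Z}^2$. This reduces us to the case $v = (0,1)$, where $l(v,n) = 1 + x_2 + \dots + x_2^n$, at the cost of replacing $\cal{P}$ by an equivalent pattern; since abelian rigidity is preserved under such unimodular changes (the sums are just relabelled), this is harmless.

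With $v = (0,1)$, the polynomial $Poly_{\cal{P}}(x_1,x_2)$ is divisible by $1 + x_2 + \dots + x_2^n = \frac{x_2^{n+1}-1}{x_2 - 1}$, which is exactly the polynomial whose roots are the primitive and non-primitive $(n{+}1)$-th roots of unity other than $1$; equivalently $1+x_2+\dots+x_2^n = \prod_{d \mid n+1,\, d > 1}\Phi_d(x_2)$. The key step is to build a function constant along the $x_1$-direction but genuinely depending on $x_2$ with period $n+1$: set
\[
 \mbf{w}(a,b) = \zeta^{b}, \qquad \zeta = e^{2\pi i/(n+1)}.
\]
Then for a figure $F = w + F_{\cal{P}}$ one computes
\[
 \sum_{(t,g_t)\in F_{\cal{P}}} g_t\, \mbf{w}(w + t) = \zeta^{w_2}\!\!\sum_{(t,g_t)\in F_{\cal{P}}} g_t\, \zeta^{t_2} = \zeta^{w_2}\, Poly_{\cal{P}}(\cdot,\zeta),
\]
where the dummy first argument is irrelevant since, after substituting $x_2 = \zeta$, every coefficient of powers of $x_1$ is a value of $l(v,n)$ at $\zeta$ times something, hence $0$; more carefully, writing $Poly_{\cal{P}}(x_1,x_2) = l((0,1),n)\cdot Q(x_1,x_2)$ and substituting $x_2 = \zeta$ kills the factor $l = \frac{\zeta^{n+1}-1}{\zeta-1} = 0$, so the whole sum vanishes. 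Thus $a_{\mbf{w}}(\cal{P}) = 1$ (all $\cal{P}$-linear combinations equal $0$), yet $\mbf{w}$ is constant in the $x_1$-direction and nonconstant in the $x_2$-direction, hence not fully periodic — it has no period vector transverse to $(1,0)$ once we note $\mbf{w}(a,b) = \mbf{w}(a',b)$ always but $\mbf{w}(a,b)\neq \mbf{w}(a,b')$ whenever $b \not\equiv b' \pmod{n+1}$, so the only period vectors are multiples of $(1,0)$, giving at most one independent period. Translating back through the unimodular change of coordinates, $\cal{P}$ is not abelian rigid. This also proves the only-if part of Theorem \ref{th:main_convex}, since $char(F)$ equals $Poly_{\cal{P}}$ up to a monomial.

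One technical point to handle cleanly: the function $\mbf{w}$ above is complex-valued, which is fine for the statement about constant sums but, if one wants a genuine finite-alphabet word realizing $a_{\mbf{w}}(\cal{P}) = 1$ (as in Theorem \ref{th:main_convex}), one should instead take $\mbf{w}(a,b)$ to be the residue $b \bmod (n+1)$ viewed as a letter — but then the $\cal{P}$-sums are no longer literally equal unless $n+1$ divides appropriate coefficient sums. The resolution, which I expect to be the only real subtlety, is that $char(F)$ having all coefficients equal to $1$ (the unweighted case relevant to Theorem \ref{th:main_convex}) forces the quotient $Q$ to have the property that $\sum_{i=0}^{n}(\text{stuff})$ genuinely partitions $F$ into blocks of $n+1$ consecutive points along direction $v$; one then colors each block $\{p, p+v, \dots, p+nv\}$ with the $n+1$ distinct letters in some fixed cyclic order, making every $F$-factor contain each letter the same number of times. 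Verifying that convexity of $F$ guarantees this "striping" decomposition exists is where the argument needs care; for the weaker weighted statement (Theorem \ref{th:has_divisor_then_not_peridic} as stated, over $\bb{C}$), the exponential function above suffices and no such combinatorial decomposition is needed.
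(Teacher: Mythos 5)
Your construction does not prove the theorem: the word $\mbf{w}(a,b)=\zeta^{b}$ with $\zeta=e^{2\pi i/(n+1)}$ is \emph{fully periodic}, since besides the period $(1,0)$ it also has the independent period $(0,n+1)$ (indeed $\zeta^{\,b+n+1}=\zeta^{b}$). Your claim that ``the only period vectors are multiples of $(1,0)$'' is false: $\mbf{w}(a,b)=\mbf{w}(a,b')$ exactly when $b\equiv b'\pmod{n+1}$, which is a second period. Since the conclusion to be proved is the existence of a \emph{non}-fully-periodic word with $a_{\mbf{w}}(\cal{P})=1$, a fully periodic witness establishes nothing. The missing idea --- and the heart of the paper's proof --- is an aperiodic perturbation transverse to $v$. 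Writing $v=ku$ for a basis $u,u'$, the paper sets $\mbf{w}((ka+r)u+bu')\equiv a+b+[b\ \text{is prime}]\pmod{n+1}$ (the paper writes $n$; this is an off-by-one). Every translate of the progression $F_{\cal{P}'}=\{0,v,\dots,nv\}$ lies on a single line $b=\mathrm{const}$, so the prime offset shifts all of its $n+1$ values by the same amount and each translate still carries each of the $n+1$ letters exactly once; hence $a_{\mbf{w}}(\cal{P}')=1$, and since the weighted figure $F_{\cal{P}}$ is a $Q$-weighted combination of translates of $F_{\cal{P}'}$ (this is precisely what $Poly_{\cal{P}}=l(v,n)\,Q$ says), $a_{\mbf{w}}(\cal{P})=1$ as well. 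The aperiodicity of the primes then kills every period vector with nonzero $u'$-component, so $\mbf{w}$ is periodic but not fully periodic.

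Two further points. First, the theorem concerns $a_{\mbf{w}}(\cal{P})=1$, i.e.\ equality of the \emph{formal} $\cal{P}$-linear combinations in $\bb{Z}[a_1,\dots,a_k]$; constancy of the numerical sums of complex letters --- which is all that $\zeta^{b}$ gives, and distinct multisets of roots of unity can have equal sums --- is strictly weaker, so even setting periodicity aside your exponential does not address the stated conclusion. Second, your worry about convexity and the existence of a ``striping'' is unfounded: Theorem \ref{th:has_divisor_then_not_peridic} assumes no convexity, and none is needed, because the decomposition of $F_{\cal{P}}$ into weighted translates of the arithmetic progression is an immediate restatement of the divisibility $Poly_{\cal{P}}=l(v,n)\,Q$. (Also, the hypothesis does not allow you to assume $v$ primitive; this is easily repaired by choosing the root of unity so that $\zeta^{m}$ is a primitive $(n+1)$-th root when $v=mv_0$, or, in the paper's combinatorial construction, by working with $v=ku$ directly, but it should not be assumed silently.)
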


	In other words, there exists a two-dimensional word $\bf{w}$ such that $\bf{w}$ is not fully periodic and $a_{\bf{w}}(\cal{P}) = 1$.
	
	\begin{proof}
		Suppose first that  $Poly_{\cal{P}}$  has a strongly linear divisor, i.e., $Poly_{\cal{P}}  = l(v, n)Q$ for some integer $n$, a vector $v$ and a polynomial $Q$. We let $\cal{P}'$ denote the pattern corresponding to the polynomial $l(v,n)$, so that $Poly_{\cal{P}'} = l(v,n)$.
		
		Let $u, u'$ be a pair of basis vectors such that $v = ku$ for some integer $k$.		
		We now construct a word $\bf{w}$ on the alphabet $\{0, 1, 2 \ldots n - 1\}$ such that $a_{\bf{w}}(\cal{P}') = 1$. We set
		\begin{eqnarray}\label{eq:example_has_div}
			{\bf{w}} ((ka + r)u + bu') = 
			\begin{cases}
				(a + b) \mod n, & \text{ if $b$ is not prime}, \\
				(a + b + 1) \mod n, & \text{ if $b$ is prime};
			\end{cases}
		\end{eqnarray}		 
			for all $r < k$. Obviously, $\bf{w}$ has only one abelian class in $\cal{P}$ and it is not fully periodic.
	\end{proof}
	
	\begin{remark}
	It is not hard to see that Theorem \ref{th:has_divisor_then_not_peridic} can easily be generalized to higher dimensions. In addition to strongly linear divisors, one can consider divisors in subspaces of higher dimensions: planes, hyperplanes, etc. which allow to get aperiodic words and functions.
	\end{remark}

	
	\subsection{Relatively prime patterns and their coordinates}
	
	Let $u, v$ be two basis vectors in $\bb{Z}^2$, i.e., for any vector $z \in \bb{Z}^2$ there exist integers $a$ and $b$ such that $z = au + bv$. We will say that $(a, b)_{u, v}$ are \emph{$(u,v)$-coordinates} of the point $z$.
	
	
	For integers $l_0, l_1 , \ldots , l_n, r_0, r_1, \ldots , r_n$ such that 
	$l_0 < r_0, l_1 \leq r_1, l_2 \leq r_2, \ldots , l_{n-1} \leq r_{n-1}, l_n < r_n$, 
	we define the figure $F_{u, v}(l_0, l_2, \ldots l_n, r_0, r_2, \ldots r_n)$ as follows:
	$$
		F_{u, v}(l_0, l_2, \ldots, l_n; r_0, r_2, \ldots, r_n) = 
		\{(i, j)_{u, v} | 0 \leq i \leq n, l_i \leq j < r_i\}.
	$$
	We let $\cal{P}_{u, v}(l_0, l_2, \ldots, l_n; r_0, r_2, \ldots, r_n)$ denote the pattern of $F_{u, v}(l_0, l_2, \ldots, l_n; r_0, r_2, \ldots, r_n)$.
		
	\begin{prop} \label{prop:corr}
		Let $\cal{P}$ be a convex pattern and $u, v$ be basis vectors. Then there exist integers $n$ and $l_0, l_1, \ldots, l_n, r_0, r_1, \ldots, r_n$ such that 
		\begin{equation} \label{eq:basis_repr}
			\cal{P} = \cal{P}_{u, v}(l_0, l_1, \ldots, l_n; r_0, r_1, \ldots, r_n).
		\end{equation}
	\end{prop}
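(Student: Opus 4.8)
The plan is to reduce to the case of standard coordinates by a unimodular change of variables and then to read off the numbers $l_i,r_i$ one $u$-column at a time. First I would record that passing to $(u,v)$-coordinates is an invertible $\bb{Z}$-linear change of variables: since $u,v$ form a $\bb{Z}$-basis of $\bb{Z}^2$, the map $\phi$ sending a point $z$ to its $(u,v)$-coordinates is the restriction to $\bb{Z}^2$ of an invertible linear map of $\bb{R}^2$ carrying $\bb{Z}^2$ bijectively onto itself. Such a $\phi$ commutes with translations and with the convex hull operation, $\phi(conv_{\bb{R}^2}(S))=conv_{\bb{R}^2}(\phi(S))$, and preserves $\bb{Z}^2$; hence a figure $F$ is convex in the sense of Definition~\ref{def:convex_pattern} if and only if $\phi(F)$ is. Moreover, directly from its definition, $F_{u,v}(l_0,\ldots,l_n;r_0,\ldots,r_n)=\phi^{-1}(S(l_0,\ldots,l_n;r_0,\ldots,r_n))$, where $S(l_0,\ldots,l_n;r_0,\ldots,r_n):=\{(i,j)\in\bb{Z}^2:0\le i\le n,\ l_i\le j<r_i\}$ is the ``standard staircase''. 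So if I fix a figure $F$ of $\cal{P}$ and produce parameters with some translate of $\phi(F)$ equal to $S(l_0,\ldots;r_0,\ldots)$, then $\phi^{-1}$ turns this back into the statement that $F_{u,v}(l_0,\ldots;r_0,\ldots)$ is a translate of $F$, hence a figure of $\cal{P}$, which gives $\cal{P}=\cal{P}_{u,v}(l_0,\ldots;r_0,\ldots)$. Thus it is enough to prove: every non-empty convex figure $G\subseteq\bb{Z}^2$ is a translate of some standard staircase.

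Next I would translate $G$ so that $0$ is the smallest first coordinate occurring in $G$ and let $n$ be the largest. For $i\in\{0,1,\ldots,n\}$ set $C_i:=\{j\in\bb{Z}:(i,j)\in G\}$. The only genuinely geometric point is that each $C_i$ is a (possibly empty) block of consecutive integers: if $(i,j_1),(i,j_2)\in G$ with $j_1<j<j_2$, $j\in\bb{Z}$, then $(i,j)$ lies on the segment joining $(i,j_1)$ to $(i,j_2)$, hence in $conv_{\bb{R}^2}(G)$, and being an integer point it lies in $conv_{\bb{R}^2}(G)\cap\bb{Z}^2=G$. Accordingly I put $l_i:=\min C_i$ and $r_i:=1+\max C_i$ when $C_i\ne\emptyset$, and $l_i:=r_i:=0$ when $C_i=\emptyset$; in either case $C_i=\{j\in\bb{Z}:l_i\le j<r_i\}$.

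It then remains to check the inequalities demanded in the definition of $F_{u,v}$ and to reassemble $G$. As $G$ meets the lines $i=0$ and $i=n$, the sets $C_0$ and $C_n$ are non-empty, so $l_0<r_0$ and $l_n<r_n$, while $l_i\le r_i$ for every $i$ by construction. Finally
\[
 G \;=\; \bigcup_{i=0}^{n}\{(i,j):j\in C_i\} \;=\; \bigcup_{i=0}^{n}\{(i,j):l_i\le j<r_i\} \;=\; S(l_0,\ldots,l_n;r_0,\ldots,r_n),
\]
which is what was needed.

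I do not expect a real obstacle: the substance of the proposition is only that a convex figure is ``vertically convex'', so each of its $u$-columns is an integer interval. The two points that need a little care are the bookkeeping for the unimodular coordinate change (which is what legitimizes assuming $u=(1,0)$, $v=(0,1)$), and the fact that intermediate columns $C_i$ can be empty — which is exactly why the definition of $F_{u,v}$ only requires $l_i\le r_i$, rather than strict inequality, when $0<i<n$.
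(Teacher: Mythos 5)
Your proposal is correct and follows essentially the same route as the paper's proof: translate the figure so the minimal first $(u,v)$-coordinate is $0$, observe that convexity forces each column $C_i$ to be an integer interval, and read off $l_i$ and $r_i$. You are somewhat more careful than the paper in two places it leaves implicit — justifying that the unimodular change to $(u,v)$-coordinates preserves convexity, and handling empty intermediate columns — but these are refinements of the same argument, not a different one.
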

	\begin{proof}
		Suppose that a figure $F$ belongs to the pattern $\cal{P}$ and $q$ is the minimal first coordinate of the figure $F$. Then the figure $F'=F - qu  = \{x - qu | x \in F\}$ also belongs to the pattern $\cal{P}$.
		
		Suppose that the maximal first coordinate of $F'$ is $n$. Let $C_i$ be the set of points of the figure $F'$ such that their first coordinate is equal to $i$.
		
		Put $l_i$ as the minimal second coordinate of the points in the set $C_i$ and $r_i$ as the maximal second coordinate plus $1$. 
		The figure $F'$ is convex, so 
		$C_i = \{(i, t) | l_i \leq t < r_i\}$. Then 
		\[ F' = F_{u, v}(l_0, l_2, \ldots, l_n; r_0, r_2, \ldots, r_n).\] 
		and $\cal{P}$ is the pattern of the figure $F'$.
	\end{proof}
	
	 We shall call this form of a convex pattern a $(u, v)$-\emph{representation}.
	
	
		
	


	  
We now introduce the concept of a relatively prime pattern, which as we will later show gives a geometric interpretation of the characterization of abelian rigid patterns.	  
Let $u$ be an integer vector and $\cal{P}$ be a convex pattern. Each line parallel to $u$ intersects the pattern in several integer points; we call it the length of the intersection. Now consider all lines that are parallel to the vector $u$ and intersect some figure of $\cal{P}$ (say, the canonical one). If  the greatest common divisor of the lengths of intersections of the set of such lines is equal to 1, then the pattern $\cal{P}$ is called $u$-relatively prime. A convex pattern  $\cal{P}$ is called  \emph{relatively prime} if it is $u$-relatively prime for each vector $u$. On Fig.~\ref{fig:rel_prime} 
one can see an example of a relatively prime pattern and an example of non-relatively prime pattern.

\begin{figure}[h]
    \centering
       
	\begin{tikzpicture}
	
	\begin{scope}[line cap=round,line join=round,>=triangle 45,x=1cm,y=1cm, scale =0.9]
\draw [color=cqcqcq,, xstep=1cm,ystep=1cm] (-9.5,-0.5) grid (-1.5,3.5);
\clip(-9.5,-0.5) rectangle (-1.5,3.5);
\draw [line width=2pt,domain=-10.020336072162362:-0.6981087197222217] plot(\x,{(--8--1*\x)/2});
\draw [line width=2pt,domain=-10.020336072162362:-0.6981087197222217] plot(\x,{(--9--1*\x)/2});
\draw [line width=2pt,domain=-10.020336072162362:-0.6981087197222217] plot(\x,{(--7--1*\x)/2});
\draw [line width=2pt,domain=-10.020336072162362:-0.6981087197222217] plot(\x,{(--6--1*\x)/2});
\draw [line width=2pt,domain=-10.020336072162362:-0.6981087197222217] plot(\x,{(--5--1*\x)/2});
\begin{scriptsize}
\draw [fill=ududff] (-7,1) circle (2.5pt);
\draw [fill=ududff] (-5,2) circle (2.5pt);
\draw [fill=ududff] (-7,0) circle (2.5pt);
\draw [fill=ududff] (-6,1) circle (2.5pt);
\draw [fill=ududff] (-5,1) circle (2.5pt);
\draw [fill=ududff] (-4,2) circle (2.5pt);
\draw [fill=ududff] (-6,0) circle (2.5pt);
\draw [fill=ududff] (-5,0) circle (2.5pt);
\draw [fill=ududff] (-8,0) circle (2.5pt);
\draw [fill=xdxdff] (-9,0) circle (2.5pt);
\draw [fill=xdxdff] (-4,1) circle (2.5pt);
\draw [fill=xdxdff] (-3,1) circle (2.5pt);
\draw [fill=xdxdff] (-3,2) circle (2.5pt);
\draw [fill=xdxdff] (-3,3) circle (2.5pt);
\end{scriptsize}
\end{scope}
\begin{scope}[line cap=round,line join=round,>=triangle 45,x=1cm,y=1cm, yshift = 0.5cm, xshift = 3cm, scale = 0.7]
\draw [color=cqcqcq,, xstep=1cm,ystep=1cm] (-5.5,-1.5) grid (2.5,4.5);
\clip(-5.5,-1.5) rectangle (2.5,4.5);
\draw [line width=2pt,domain=-15.36:15.36] plot(\x,{(--3-0*\x)/1});
\draw [line width=2pt,domain=-15.36:15.36] plot(\x,{(--6-0*\x)/3});
\draw [line width=2pt,domain=-15.36:15.36] plot(\x,{(--1-0*\x)/1});
\draw [line width=2pt,domain=-15.36:15.36] plot(\x,{(-0-0*\x)/1});
\begin{scriptsize}
\draw [fill=ududff] (-2,3) circle (4pt);
\draw [fill=ududff] (-1,3) circle (4pt);
\draw [fill=ududff] (-3,2) circle (4pt);
\draw [fill=ududff] (-2,2) circle (4pt);
\draw [fill=ududff] (-1,2) circle (4pt);
\draw [fill=ududff] (0,2) circle (4pt);
\draw [fill=ududff] (-4,1) circle (4pt);
\draw [fill=ududff] (-3,1) circle (4pt);
\draw [fill=ududff] (-2,1) circle (4pt);
\draw [fill=ududff] (-1,1) circle (4pt);
\draw [fill=ududff] (0,1) circle (4pt);
\draw [fill=ududff] (1,1) circle (4pt);
\draw [fill=ududff] (-2,0) circle (4pt);
\draw [fill=ududff] (-3,0) circle (4pt);
\end{scriptsize}
\end{scope}
	
	\end{tikzpicture}
 \caption{Examples of a $(2,1)$-relatively prime (on the left) and not relatively prime (not $(1, 0)$-relatively prime) pattern (on the right)}
    \label{fig:rel_prime}
\end{figure}
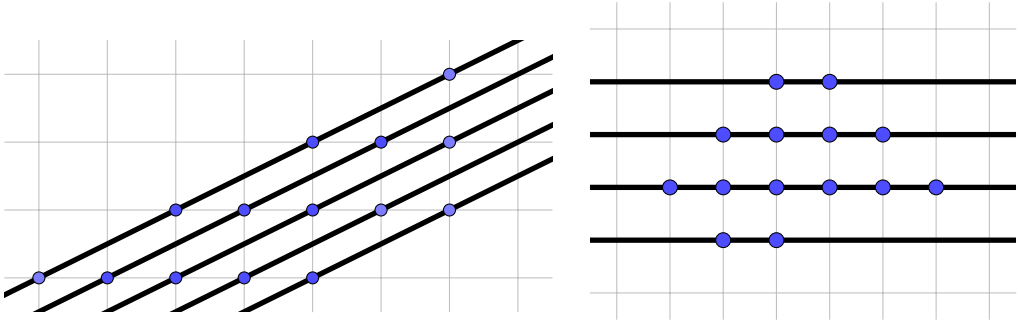
	
	A more precise but a bit less intuitive definition of a relatively prime pattern can be also given in terms of $(u,v)$-representations. 
		    Let $u$ be an integer vector and $v$ be an integer vector such that $\{u, v\}$ is basis. 
	    Let $\cal{P}$ be a convex pattern and $\cal{P}_{u, v}(l_0, l_1, \ldots, l_n; r_0, r_1, \ldots ,r_n)$ be a $(u, v)$-representation of $\cal{P}$. The pattern $\cal{P}$ is called \emph{$u$-relatively prime pattern} if $gcd(r_0-l_0, r_1 - l_1 \ldots , r_n - l_n) = 1$. 

	An alternative geometric formulation of Theorem \ref{th:main_convex} via the notion of a relative pattern is as follows:
		\begin{theorem} \label{th:41}
		Let $\cal{P}$ be a convex pattern.
		Then $\cal{P}$ is relatively prime if and only if $\cal{P}$ is  abelian rigid. 
	\end{theorem}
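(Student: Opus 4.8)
The plan is to derive Theorem~\ref{th:41} from Theorem~\ref{th:main_convex} by translating the geometric condition ``$\cal{P}$ is relatively prime'' into the algebraic condition ``$char(F)$ has no non-trivial strongly linear divisor''. Precisely, for a convex pattern $\cal{P}$ with figure $F$ I would establish the dictionary
\[
  \cal{P}\text{ is not relatively prime}\iff char(F)\text{ has a non-trivial strongly linear divisor.}
\]
Granting this, Theorem~\ref{th:main_convex} immediately yields that $\cal{P}$ is relatively prime $\iff$ $char(F)$ has no non-trivial strongly linear divisor $\iff$ $\cal{P}$ is abelian rigid, which is exactly the assertion. (Note the right-hand condition is translation invariant, so it does not matter which figure $F\in\cal{P}$ is used.)

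For the dictionary I would work in coordinates adapted to one direction. Fix a primitive vector $u_0$ and extend $\{u_0\}$ to a basis $\{u_0,v\}$ of $\bb{Z}^2$; then $x^{u_0}\mapsto t$, $x^{v}\mapsto s$ induces an isomorphism $\bb{Z}[x_1^{\pm1},x_2^{\pm1}]\cong\bb{Z}[t^{\pm1},s^{\pm1}]$, under which, grouping the points of $F$ by their $v$-coordinate,
\[
  char(F)=\sum_{j}s^{\,j}P_j(t),\qquad P_j(t)=t^{\alpha_j}\bigl(1+t+\dots+t^{\,N_j-1}\bigr),
\]
where $N_j$ is the number of lattice points of $F$ on the corresponding line in direction $u_0$. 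Convexity of $F$ is used exactly here: it forces each such ``column'' to be an interval, so each $P_j$ has this closed form. For the implication ``$\Leftarrow$'': if $\cal{P}$ is not $u$-relatively prime for some $u$, replace $u$ by the primitive vector $u_0$ parallel to it (the family of lines is unchanged), set $d=\gcd_j N_j\ge 2$; since $d\mid N_j$ we get $1+t+\dots+t^{\,d-1}\mid 1+t+\dots+t^{\,N_j-1}$ for every $j$, hence $l(u_0,d-1)\mid char(F)$ with Laurent-polynomial quotient, and $l(u_0,d-1)$ is a non-trivial strongly linear divisor because $d-1\ge 1$.

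For the converse ``$\Rightarrow$'', suppose $l(v',m)\mid char(F)$ with $m\ge 1$ and $v'\neq 0$ (the cases $m=0$ and $v'=0$ being the ones excluded by non-triviality), and write $v'=ku_0$ with $u_0$ primitive and $k\ge1$; use the adapted coordinates for this $u_0$. Since $l(v',m)=1+t^{k}+\dots+t^{mk}$ involves $t=x^{u_0}$ only, matching coefficients of the distinct powers of $s$ in $char(F)=\sum_j s^j P_j(t)$ forces $l(v',m)\mid P_j(t)$ for every $j$; as $l(v',m)$ has constant term $1$, it is coprime to $t^{\alpha_j}$, so $l(v',m)\mid 1+t+\dots+t^{\,N_j-1}$. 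Comparing the cyclotomic factorizations of $(t^{(m+1)k}-1)/(t^{k}-1)$ and of $(t^{N_j}-1)/(t-1)$ shows this divisibility is equivalent to $(m+1)k\mid N_j$; hence $\gcd_j N_j\ge(m+1)k\ge2$, so $\cal{P}$ is not $u_0$-relatively prime, i.e.\ not relatively prime.

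Stitching the two implications together and invoking Theorem~\ref{th:main_convex} completes the argument. The step carrying the genuine content, as opposed to bookkeeping, is the equivalence ``$l(v',m)\mid 1+t+\dots+t^{\,N_j-1}$'' $\iff$ ``$(m+1)k\mid N_j$'', together with fixing the precise meaning of ``non-trivial'' (excluding the units $m=0$ and $v'=0$); one also has to keep track of the fact that all the divisibilities may be taken in $\bb{Z}[x_1^{\pm1},x_2^{\pm1}]$ (the detour through $\bb{Q}[x_1,x_2]$ is harmless here) and that replacing a vector by the primitive vector parallel to it changes neither the family of lines nor which pattern columns occur. Everything else is a direct computation in the $(u_0,v)$-adapted coordinates.
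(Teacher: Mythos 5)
Your proposal is correct and follows essentially the same route as the paper: Theorem~\ref{th:41} is deduced from Theorem~\ref{th:main_convex} via the equivalence ``$\cal{P}$ is relatively prime $\iff$ $Poly_{\cal{P}}$ has no non-trivial strongly linear divisor''. The only difference is one of care rather than substance --- the paper dismisses this equivalence as ``clearly the same thing'', while you actually prove it (the column decomposition $\sum_j s^j t^{\alpha_j}(1+t+\dots+t^{N_j-1})$ in coordinates adapted to a primitive direction, coefficient matching in $s$, and the cyclotomic criterion $l(v',m)\mid 1+t+\dots+t^{N_j-1}\iff (m+1)k\mid N_j$), and your argument for this dictionary is sound.
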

	
	The statement is equivalent to Theorem \ref{th:main_convex} since being relatively prime is clearly the same thing as not being divisible by a strongly linear divisor.

\subsection{Proof of Theorem \ref{th:main_convex}}	\label{section:main_proof}
	
We are going to prove the theorem in the form of Theorem \ref{th:41}. By Theorem \ref{th:has_divisor_then_not_peridic}, an abelian rigid pattern must be relatively prime. So, it remains to prove that $\mbf{w}$ is fully periodic if $a_{\mbf{w}}(\cal{P}) = 1$ for some convex relatively prime pattern.
	
\begin{definition}\label{def:par&ext_fig}
		Let $D_1$ and $D_2$ be two convex polygons with vertices $a_1, a_2, \ldots , a_n$ and $b_1, b_2, \ldots, b_m$, respectively. 
		The polygons $D_1$ and $D_2$ are called \emph{parallel} if \linebreak 
		${n = m}$ and all their corresponding edges are parallel, i.e., there exists $k$ such that $a_{i \mod n}a_{i + 1 \mod n} $ is parallel to $b_{i + k \mod n}b_{i + k + 1 \mod n}$ for each $i$.
\end{definition}	
	
	The following proposition constitutes the main part of the proof of Theorem \ref{th:main_convex}.  Essentially, it says that if a word is equal to $0$ in a big enough polygon $D$ parallel to a convex hull $D_0$ of $\cal{P}$, then it also equals $0$ in an extended polygon $D_1$ homothetic to the initial polygon $D_0$.  For a set $X$, we let $diam(X)$ denote the diameter of $X$ in Euclidean metric on the plane. The accurate statement follows: 
	
	
	\begin{prop} \label{lem:full_ext}
		Let $A$ be a finite subset of complex numbers containing
		$0$. Let $\cal{P}$ be a relatively prime pattern and $\bf{w}$ be a two-dimensional word on $A$ such that 
		\begin{equation} \label{eq:sum0_z2}
			\sum_{x \in F} {\bf{w}}(x) = 0
		\end{equation}
		for all figures $F \in \cal{P}$. 
			Let $D$ be the convex hull of the figure $F_{\cal{P}}$. 
		Then there exists a number $N(|A|, \cal{P})$ such that the following implication holds: 
		
		If there exists a polygon $D_0$ parallel to $D$ such that 
		\begin{itemize}
			\item any edge of $D_0$ is longer than $N(|A|, \cal{P})$;
			\item ${\bf{w}}(x) = 0$ for each $x \in D_0 \cap \bb{Z}^2$,
		\end{itemize}
		then there exists a polygon $D_1$ parallel to $D$  such that 
		\begin{itemize}
			\item there exists a homothety $H$ with coefficient bigger than 1 such that $H(D_0) = D_1$;
			\item $diam(D_1) = diam(D_0) + 1$,  
			\item ${\bf{w}}(x) = 0$ for all $x \in D_1 \cap \bb{Z}^2$.
		\end{itemize}
	\end{prop}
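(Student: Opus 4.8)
The plan is to enlarge the zero-set of $\mbf w$ outward from $D_0$ by pushing each side of $D_0$ out by a bounded number of lattice lines, and then to verify that the thin shell $D_1\setminus D_0$ lies inside the enlarged zero-set. Given $D_0$, fix a homothety $H$ with centre $c\in\operatorname{int}(D_0)$ and coefficient $\lambda=1+1/diam(D_0)$, and put $D_1=H(D_0)$; then $D_1$ is again parallel to $D$ and $diam(D_1)=diam(D_0)+1$. Since $H$ multiplies all distances by $\lambda$, the distance between a side $E$ of $D_0$ and the corresponding side $H(E)$ of $D_1$ is $(\lambda-1)\,dist(c,\operatorname{aff}(E))\le 1$, and a short computation with the vertices of $D_0$ shows that the new vertices of $D_1$ lie exactly on the outer boundaries of the strips $S_E$ lying between $E$ and $H(E)$; hence $D_1\setminus D_0=\bigcup_E S_E$. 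As $D$ has finitely many sides with primitive direction vectors of bounded norm, each $S_E$ meets only $O_{\cal P}(1)$ lattice lines parallel to $E$, each of length comparable to $|E|>N$. So it suffices to prove, for every side $E$ of $D_0$, that $\mbf w$ vanishes on $S_E\cap\bb Z^2$.

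Fix such a side $E$, let $u$ be the primitive vector parallel to $E$, choose a primitive $v$ so that $(u,v)$ is a basis with $v$ pointing out of $D_0$, and use Proposition~\ref{prop:corr} to write $\cal P=\cal P_{u,v}(l_0,\dots,l_n;r_0,\dots,r_n)$. Let $\ell_1,\ell_2,\dots$ be the consecutive lattice lines parallel to $E$ on its outer side. The core step is a sliding argument: one positions a translate of $F_{\cal P}$ — or, using Lemma~\ref{lem:lin_comb}, of the figure of a polynomial $Z(x)Poly_{\cal P}(x)$, in particular of $(1-x^{u})^{s}Poly_{\cal P}(x)$ — so that its extreme layers (in the $v$-direction) lie on $\ell_1,\dots,\ell_t$ while the remaining points of the figure lie in the region already known to carry value $0$ (first $D_0\cap\bb Z^2$, and then, as they are successively shown to vanish, $\ell_1,\ell_2,\dots$). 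The identity \eqref{eq:sum0_z2} together with the vanishing on that remaining part forces $\mbf w$ to sum to $0$ over the corresponding window on $\ell_t$, whose length is one of $r_0-l_0,\dots,r_n-l_n$, for every admissible position of the window; admissibility fails only for $O_{\cal P}(1)$ positions near the two ends of $\ell_t$ and near the corners of $D_0$, and closing up those exceptional positions, via a pigeonhole over the finitely many local patterns available there, is what makes $N$ depend on $\cal P$ and on $|A|$.

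A length-$k$ window-sum identity says precisely that $\mbf w$ is $ku$-periodic along the admissible part of the line carrying the window. As one processes $\ell_1,\ell_2,\dots$ in order, the set of available window lengths grows to include all of $r_0-l_0,\dots,r_n-l_n$; since $\cal P$ is relatively prime, $\gcd_i(r_i-l_i)=1$, so after at most $n+1$ lines these periodicities combine to force $\mbf w$ to be \emph{constant} along each line, and the constant equals $0$ because one of the windows can be slid until it overhangs into the $0$-region. A short check shows that for $N$ large the band of lines so killed contains $S_E\cap\bb Z^2$. Running this over every side $E$ yields $\mbf w\equiv 0$ on $D_1\cap\bb Z^2$, which is the assertion.

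The main obstacle is the passage from periodicity to vanishing: this is exactly where the relatively prime hypothesis is indispensable, since otherwise the explicit configurations of Theorem~\ref{th:has_divisor_then_not_peridic} show that $\mbf w$ can remain non-constant — with period $\gcd_i(r_i-l_i)\,u$ — along the new layer. Making windows of all the required lengths $r_i-l_i$ simultaneously available on the same line, and slidable into the overhang region, is the delicate part, and it relies essentially on the convexity of $\cal P$ through the $(u,v)$-representation; the remaining difficulty is the purely combinatorial bookkeeping of admissible positions near the corners of $D_0$, which is what determines the threshold $N(|A|,\cal P)$.
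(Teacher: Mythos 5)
Your overall architecture is the same as the paper's: push the zero-set outward from $D_0$ one lattice line at a time parallel to each side, derive window-sum identities of lengths $r_0-l_0,\dots,r_n-l_n$ along each new line, combine them via $\gcd_i(r_i-l_i)=1$ (relative primality) to force constancy, and use a zero window sum to pin the constant to $0$; the paper packages the single-line step as Lemma~\ref{lem:step_of_ext} and the assembly over the sides as the regions $G_1,\dots,G_n$. But there is a genuine gap at the central step. On the first new line $\ell_1$ only \emph{one} window length is directly available, namely that of the extreme layer of $F_{\cal P}$: that is the unique placement of a translate of $F_{\cal P}$ with exactly one layer outside $D_0$. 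Any placement producing one of the other lengths $r_i-l_i$ on $\ell_1$ necessarily puts further layers on $\ell_2,\ell_3,\dots$, where nothing is known, so \eqref{eq:sum0_z2} couples the unknown window sum on $\ell_1$ with unknown window sums on the outer lines. Your claim that ``as one processes $\ell_1,\ell_2,\dots$ the set of available window lengths grows'' is circular: to make a window on $\ell_2$ usable you need vanishing (not mere periodicity) on $\ell_1$, which is what you are trying to prove. The paper decouples these sums by subtracting the identity at $x$ from the identity at $x+v$ and invoking the periodicities already established on the inner lines, concluding that $\mbf{w}(x),\mbf{w}(x+(r_0-l_0)v),\mbf{w}(x+2(r_0-l_0)v),\dots$ is an arithmetic progression; a bounded sequence with more than $|A|$ terms that is an arithmetic progression is constant. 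This is simultaneously the decoupling device and the true source of the $|A|$-dependence of $N(|A|,\cal{P})$. Your proposal instead attributes the $|A|$-dependence to a ``pigeonhole over local patterns near the corners,'' which is not an argument and is aimed at the wrong place; your passing mention of $(1-x^{u})^{s}Poly_{\cal P}$ via Lemma~\ref{lem:lin_comb} points toward the right (differencing) idea but is not developed.

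Two secondary problems. First, in the $(u,v)$-representation of Proposition~\ref{prop:corr} the intervals of length $r_i-l_i$ run in the $v$-direction, so to obtain windows lying \emph{along} the lines parallel to a side $E$ you must take $v$ (not $u$) parallel to $E$ and $u$ transversal, as the paper does; with your conventions each ``layer'' is a transversal segment that meets several of the lines $\ell_t$, and the stated window lengths do not match the stated window orientations. Second, if the slabs $S_E$ are processed independently, the corner regions of $D_1$ near its vertices are unreachable: a figure translate covering such a point has inner layers lying in the neighbouring, not-yet-processed slab rather than in $D_0$. The paper avoids this by extending the sides sequentially, so each extension is bounded by the already-extended adjacent sides ($G_1\subset G_2\subset\dots$), and by budgeting the per-step loss $\Delta$ of edge length into $N(|A|,\cal{P})$.
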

	
	
	
	
	
	The proposition is illustrated on Fig. \ref{pic:full_ext}. 
	
	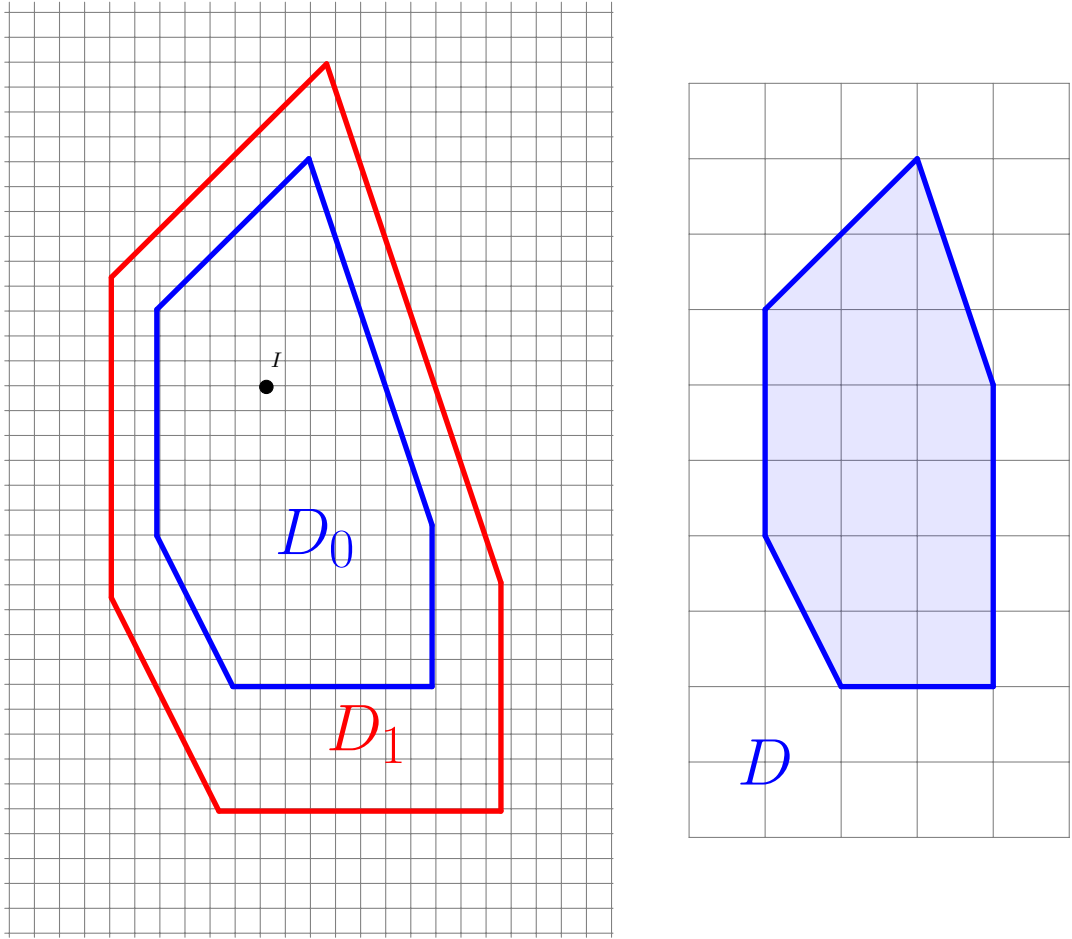
\begin{figure}[h]
	\begin{tikzpicture}
    \begin{scope}[line cap=round,line join=round,>=triangle 45,x=1cm,y=1cm]
\draw [color=gray, xstep=0.33cm,ystep=0.33cm] (-6,-6.324231865695288) grid (2,6.075725900116144);
\clip(-6,-6.324231865695288) rectangle (2,6.075725900116144);
\draw [line width=2pt, blue] (-2,4)-- (-4,2);
\draw [line width=2pt, blue] (-4,2)-- (-4,-1);
\draw [line width=2pt, blue] (-4,-1)-- (-3,-3);
\draw [line width=2pt, blue] (-2,4)-- (-0.38,-0.86);
\draw [line width=2pt, blue] (-3,-3)-- (-0.38,-3);
\draw [line width=2pt, blue] (-0.38,-0.86)-- (-0.38,-3);
\draw [line width=2pt, red] (0.5256620941068856,-1.6221014884977145)-- (-1.7675739341452374,5.257606596258655);
\draw [line width=2pt, red] (-1.7675739341452374,5.257606596258655)-- (-4.598729524579957,2.426451005823935);
\draw [line width=2pt, red] (-4.598729524579957,2.426451005823935)-- (-4.598729524579956,-1.8202823798281447);
\draw [line width=2pt, red] (-4.598729524579956,-1.8202823798281447)-- (-3.1831517293625966,-4.651437970262863);
\draw [line width=2pt, red] (-3.1831517293625966,-4.651437970262863)-- (0.5256620941068856,-4.651437970262864);
\draw [line width=2pt, red] (0.5256620941068856,-4.651437970262864)-- (0.5256620941068856,-1.6221014884977145);
\begin{scriptsize}
\draw [fill=black] (-2.5592841305036447,0.973835920177381) circle (2.5pt);
\draw (-2.4241347270615585,1.3370499419279875) node {$I$};
\draw[color=blue] (-1.9173244641537341,-1.0280646183085256) node {\Huge{$D_0$}};
\draw[color=red] (-1.241577446943302,-3.6296906345686897) node {\Huge{$D_1$}};
\end{scriptsize}
\end{scope}
    \begin{scope}[xshift = 8cm , line cap=round,line join=round,>=triangle 45,x=1cm,y=1cm]
\draw [color=gray,, xstep=1cm,ystep=1cm] (-5,-5) grid (0,5);
\clip(-5,-5) rectangle (0,5);
\fill[line width=2pt,color=blue,fill=blue,fill opacity=0.10000000149011612] (-4,-1) -- (-4,2) -- (-2,4) -- (-1,1) -- (-1,-3) -- (-3,-3) -- cycle;
\draw [line width=2pt,color=blue] (-4,-1)-- (-4,2);
\draw [line width=2pt,color=blue] (-4,2)-- (-2,4);
\draw [line width=2pt,color=blue] (-2,4)-- (-1,1);
\draw [line width=2pt,color=blue] (-1,1)-- (-1,-3);
\draw [line width=2pt,color=blue] (-1,-3)-- (-3,-3);
\draw [line width=2pt,color=blue] (-3,-3)-- (-4,-1);
\draw[blue] (-4, -4) node {\Huge{$D$}};
\begin{scriptsize}
\end{scriptsize}
\end{scope}
	\end{tikzpicture}
	\caption{Illustration to Proposition \ref{lem:full_ext}.} \label{pic:full_ext}
	\end{figure}
	
	\begin{definition}
		Let $\bf{l_0}$ be a line on the plane. 
		A line $\bf{l}$ is \emph{a neighbour} of the line $\bf{l_0}$ if the following three conditions hold: 
		\begin{itemize}
			\item $\bf{l}$ is an integer line, i.e., $\bf{l}$ contains at least two integer points;
			\item $\bf{l}$ and $\bf{l_0}$ are parallel;
			\item there are no integer points between $\bf{l}$ and $\bf{l_0}$.
		\end{itemize}		  
	\end{definition}
	
	For the proof of this proposition we make use of the following lemma:
	
	\begin{lemma} \label{lem:step_of_ext}
		Let $v_1, v_2, \ldots, v_n$ be the vertices of the polygon $D_0$  satisfying the conditions of Proposition \ref{lem:full_ext} and 
		$\mbf{l}$ be the neighbour line of $v_n v_1$ such that $\mbf{l} \cap D_0 = \emptyset$.  
		Let $v_1'$ be the intersection of the lines $\mbf{l}$ and $v_1v_2$, and 
		$v_n'$ be the intersection of the lines $\mbf{l}$ and $v_{n-1}v_n$.
		Let $D_0'$ be the polygon with vertices $v'_1v_2v_3 \ldots v_{n-1}v_n'$. 
		Then $\mbf{w}(x) = 0$ for each $x \in D_0' \cap \bb{Z}^2$.
	\end{lemma}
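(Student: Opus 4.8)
The plan is to reduce the lemma to a one--dimensional problem about the restriction of $\mbf{w}$ to the lattice lines parallel to the edge $v_nv_1$, and then to combine the relatively prime hypothesis with a Morse--Hedlund--type pigeonhole on the finite alphabet. First observe that $D_0\subseteq D_0'$ and that the lattice points of $D_0'$ not already in $D_0$ are exactly the lattice points of $\mbf{l}$ lying in the segment $[v_n',v_1']$; so it suffices to show $\mbf{w}$ vanishes at each of these. Let $u$ be the primitive lattice vector parallel to $v_nv_1$ and $v$ a lattice vector with $\{u,v\}$ a basis and $\mbf{l}=\mathrm{line}(v_nv_1)+v$; such a $v$ exists precisely because $\mbf{l}$ is a neighbour line. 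Since $D_0$ is parallel to $D=conv_{\bb{R}^2}(F_{\cal{P}})$ and $v_nv_1$ is one of its edges, $F_{\cal{P}}$ decomposes into its slices $\sigma_0,\dots,\sigma_m$ parallel to $u$ (each a contiguous run, by convexity), numbered so that $\sigma_0$ is the extreme slice in the direction $v$ (it is the edge of $D$ parallel to $u$ that faces $\mbf{l}$), of lengths $\ell_0,\dots,\ell_m$; here $\ell_0\geq2$, and $\gcd(\ell_0,\dots,\ell_m)=1$ since $\cal{P}$ is relatively prime. For $j\geq0$ write $\mbf{l}_j=\mathrm{line}(v_nv_1)+(j+1)v$ (so $\mbf{l}_0=\mbf{l}$) and let $W_j$ be the restriction of $\mbf{w}$ to $\mbf{l}_j$, regarded as a bounded, $A$--valued sequence indexed by the $u$--coordinate.

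The key family of identities comes from sliding $F_{\cal{P}}$ across $\mbf{l}$. For $0\leq j\leq m$, placing a translate of $F_{\cal{P}}$ with $\sigma_0$ on $\mbf{l}_j$ puts $\sigma_i$ on $\mbf{l}_{j-i}$ for each $i$; the slices with $i>j$ then lie strictly inside $D_0$, and here the hypothesis that every edge of $D_0$ exceeds $N(|A|,\cal{P})$ guarantees (once $N$ is a suitable multiple of $diam(D)$) that the hanging figure actually fits, with all but its top $j+1$ slices inside $D_0$. Since $\mbf{w}\equiv0$ on $D_0\cap\bb{Z}^2$, the equation $\sum_{x\in F}\mbf{w}(x)=0$ collapses to
\[
 \sum_{i=0}^{j}\big(\ell_i\text{-window of }W_{j-i}\big)=0,
\]
an $\ell_i$--window of $W_{j-i}$ being the sum of $\ell_i$ consecutive values of that sequence at a fixed offset determined by the shape of $F_{\cal{P}}$. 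Taking $j=0$ shows every $\ell_0$--window of $W_0$ is zero, so $W_0$ is $\ell_0$--periodic with vanishing block sum on the relevant range.

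To improve this I would use the remaining identities together with boundedness of $\mbf{w}$. Granting that $W_0$ is $\ell_0$--periodic, in the identity for $j=1$ the term ``$\ell_1$--window of $W_0$'' is an $\ell_1$--window of an $\ell_0$--periodic, mean--zero sequence, hence is itself $\ell_0$--periodic with mean zero; therefore so is the $\ell_0$--window sum of $W_1$, call it $C$. But $W_1(p+\ell_0)-W_1(p)=C(p+1)-C(p)$, so $W_1(p+k\ell_0)-W_1(p)=k\,(C(p+1)-C(p))$, and since $W_1$ is $A$--valued and the range has length exceeding $N$, two of the terms $W_1(p),W_1(p+\ell_0),\dots$ coincide as soon as $N>\ell_0|A|$, forcing $C(p+1)=C(p)$ for all $p$, hence $C\equiv0$. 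Consequently the $\ell_1$--window of $W_0$ is identically zero, i.e.\ $W_0$ is $\ell_1$--periodic; the same argument also renders $W_1$ itself $\ell_0$--periodic with zero block sum. If $\gcd(\ell_0,\ell_1)=1$ this already makes $W_0$ constant, and the vanishing block sum forces $W_0\equiv0$; in general one iterates, bringing in $\sigma_2,\dots,\sigma_m$ via the identities for $j=2,\dots,m$ and carrying along the periodicity of the auxiliary layers $W_1,W_2,\dots$, until $W_0$ is periodic with period $\gcd(\ell_0,\dots,\ell_m)=1$. Then $W_0$ is constant, the block--sum condition gives $W_0\equiv0$ on the bulk of $[v_n',v_1']$, and the $\ell_0$--periodicity of $W_0$ — which from the $j=0$ identity is valid to within $<\ell_0$ of both ends — carries the value $0$ to the remaining $O(diam(D))$ boundary points.

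The step I expect to be the real obstacle is the iteration in the last paragraph: at stage $j$ one must be certain that every term of the $j$--th identity except the leading window of $W_j$ is already known to be a window of a periodic sequence, so that the pigeonhole argument again pins it down — equivalently, one needs to establish, in the right order, that every outer layer $W_j$ is $\ell_0$--periodic with zero block sum before one can isolate the new window--sum of $W_0$. A second, more routine point is calibrating $N(|A|,\cal{P})$: it must be large enough both for all the hanging figures ($0\leq j\leq m$) to fit inside $D_0'$ and for the arithmetic--progression pigeonhole to fire at every layer, and, because the conclusion concerns \emph{all} of $D_0'\cap\bb{Z}^2$, the behaviour near the corners $v_n'$ and $v_1'$ must be checked explicitly, as indicated above.
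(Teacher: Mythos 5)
Your proposal follows essentially the same route as the paper's proof: both reduce to the sequences $W_j$ obtained by restricting $\mbf{w}$ to the successive lattice lines parallel to the edge $v_nv_1$, derive the sliding identities $\sum_{i=0}^{j}(\ell_i\text{-window of }W_{j-i})=0$ from the vanishing of $\mbf{w}$ on $D_0$, and bootstrap the periods $\ell_0,\ell_1,\ldots,\ell_m$ of $W_0$ by the same arithmetic-progression/pigeonhole argument on the finite alphabet, concluding via $\gcd(\ell_0,\ldots,\ell_m)=1$ from relative primality. The two delicate points you flag (the triangular ordering of the layer periodicities and the calibration of $N(|A|,\cal{P})$ against edge shrinkage near the corners) are exactly the ones the paper's proof handles with its nested induction and its explicit choice of $N(|A|,\cal{P})$.
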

	
	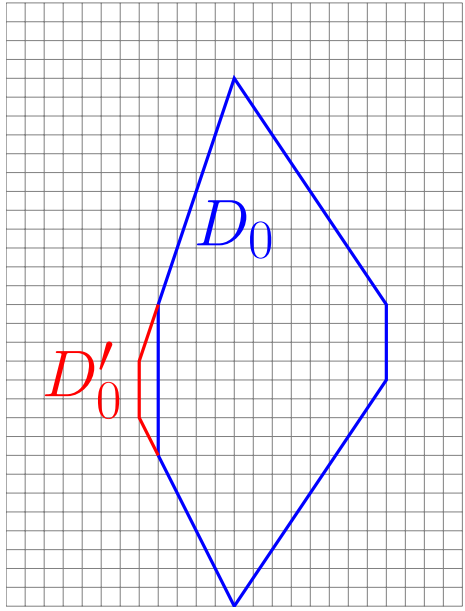
\begin{figure}[h]
	\centering
	\begin{tikzpicture}
        \clip (-3, -3) rectangle (3, 5);
        \draw[step = 0.25,very thin, gray] (-3, -3) grid (3, 5); 
        \draw[very thick, blue] (-1, -1) -- ++(0, 2) -- ++ (1, 3) -- ++(2, -3) -- ++(0, -1) -- ++(-2, -3) -- ++(-1, 2);
        \draw[blue] (0, 2) node {\Huge{$D_0$}};
        
        \draw[very thick, red] (-1, -1) -- ++(-0.25, 0.5) -- (-1.25,0.25) -- (-1, 1);
        \draw[red] (-2, 0) node {\Huge{$D_0'$}};
	\end{tikzpicture}
	\caption{Illustration for Lemma \ref{lem:step_of_ext}} \label{pic:step_of_ext}
	\end{figure}
	
	In other words, this lemma states that if $\mbf{w}$ is equal to $0$ in $D_0$, then we can extend $D_0$ by an integer line parallel to any its side, and  $\mbf{w}$ is also equal to $0$ in this extended polygon. We refer to Fig.~\ref{pic:step_of_ext} for the illustration.
	
	\begin{proof}[Proof of Lemma \ref{lem:step_of_ext}]
		Let us consider two cases. Case 1: $v_1v_n$ is an integer line, i.e., it contains at least two integer points. Case 2: $v_1v_n$ is not an integer line. 
		In the first case, we let $v_1''$ and $v_n''$ denote the points $v_1$ and $v_n$, and $\bf{l'} = \bf{l}$.  
		In the second case, put $v_1'' = {\bf{l'}} \cap v_1v_2$ and $v_n'' = {\bf{l'}}  \cap v_nv_{n-1}$, where $\bf{l'}$ is the other neighbour of line $v_1v_n$ (see Fig. \ref{pic:not_lines}).
		
		Let $(u,v)$ be the basis of $\bb{Z}^2$ such that the point $v_1'' - u$ belongs  $\bf{l}$ and $v= k v_1v_n$ for some positive real $k$. Let $\cal{P}(l_0, l_1, \ldots, l_m; r_0, \ldots ,r_m)$ be the $(u, v)$-representation of the pattern $\cal{P}$. We let $F_0$ denote the figure $F(l_0, l_1, \ldots, l_m; r_0, \ldots ,r_m)$. Without loss of generality we may assume that $l_0 = 0$, and that the {\color{blue}} second $(u, v)$-coordinate 
		of the segment $v_1''v_n''$ is equal to $-1$.
		Let ${\bf{l'}}_0 = {\bf{l}}, {\bf{l'}}_1, {\bf{l'}}_2, \ldots , {\bf{l'}}_m$ be the sequence of consecutive lines, i.e., ${\bf l'}_i$ and ${\bf l}'_{i+1}$ are neighbours for each $i = 0, 1, \ldots , m-1$ and ${\bf{l'}}_{i+1}\ne {\bf{l'}}_{i-1}$ (see Fig.~\ref{pic:not_lines}).
		Let $s_i$ be the intersection of the lines $v_1v_2$ and ${\bf l'}_i$, and $t_i$ be the intersection of $v_nv_{n-1}$ and ${\bf l'}_i$.
		Let $w_i$ be the sequence defined by $w_i(q) = \mbf{w}(r_q)$, where $r_q$ is $q$'th integer point on the segment $s_it_i$. 
		

	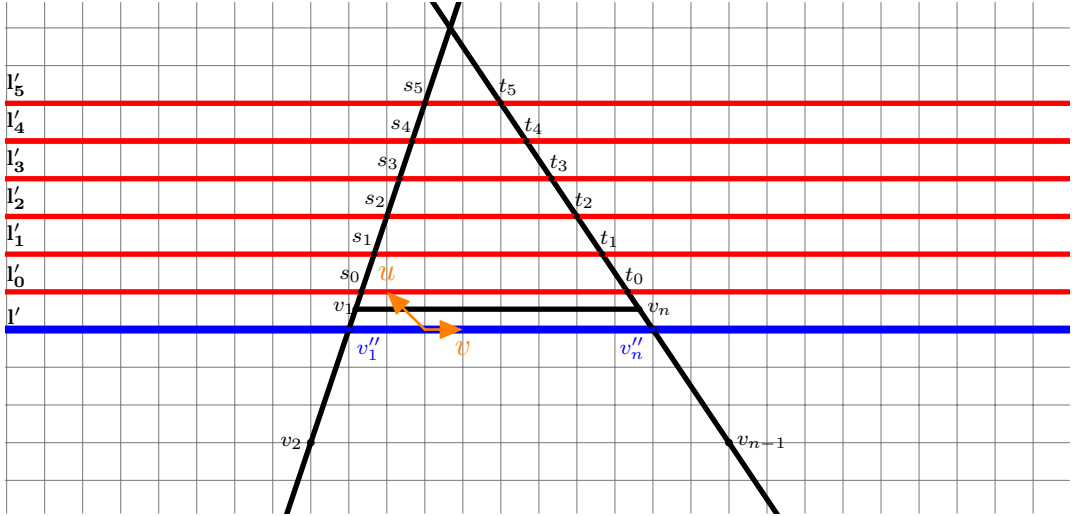
\begin{figure}[h]
	\centering
	\begin{tikzpicture}[line cap=round,line join=round,>=triangle 45,x=1cm,y=1cm, scale = 0.5]
\draw [color=gray,, xstep=1cm,ystep=1cm] (-14.036155147522754,-6.870345071077666) grid (13.957221388439766,6.675430820091583);
\clip(-14.036155147522754,-6.870345071077666) rectangle (13.957221388439766,6.675430820091583);

\draw [line width=3pt,blue, domain=-14.036155147522754:13.957221388439766] plot(\x,{(-16-0*\x)/8});
\draw [line width=2pt,domain=-14.036155147522754:13.957221388439766] plot(\x,{(-13-3*\x)/-1});
\draw [line width=2pt,domain=-14.036155147522754:13.957221388439766] plot(\x,{(--5-3*\x)/2});
\draw [line width=2pt, red,domain=-14.036155147522754:13.957221388439766] plot(\x,{(-8-0*\x)/8});
\draw [line width=2pt, red ,domain=-14.036155147522754:13.957221388439766] plot(\x,{(-0-0*\x)/8});
\draw [line width=2pt, red ,domain=-14.036155147522754:13.957221388439766] plot(\x,{(--8-0*\x)/8});
\draw [line width=2pt, red,domain=-14.036155147522754:13.957221388439766] plot(\x,{(--16-0*\x)/8});
\draw [line width=2pt, red,domain=-14.036155147522754:13.957221388439766] plot(\x,{(--24-0*\x)/8});
\draw [line width=2pt,red, domain=-14.036155147522754:13.957221388439766] plot(\x,{(--24-0*\x)/8});
\draw [line width=2pt,red ,domain=-14.036155147522754:13.957221388439766] plot(\x,{(--32-0*\x)/8});

\draw [line width=2pt] (-4.82,-1.46)-- (2.64,-1.46);
\draw [->,line width=1pt, orange] (-3,-2) -- (-2,-2) node[below, orange] {$v$};
\draw [->,line width=1pt, orange] (-3,-2) -- (-4,-1) node[above, orange] {$u$};

\begin{scriptsize}
\draw [fill=blue] (-5,-2) circle (2.5pt) node[blue, below right] {$v''_1$};

\draw [fill=blue] (3,-2) circle (2.5pt) node[blue, below left] {$v''_n$};

\draw[color=black] (-13.796895519010254,-1.6342401240156403) node {$\bf{l}'$};

\draw [fill=black] (-6,-5) circle (2.5pt) node[left] {$v_2$};

\draw [fill=black] (5,-5) circle (2.5pt) node[right] {$v_{n-1}$};

\draw[color=black] (-13.741681758584292,-0.5851786759223696) node {$\bf{l}'_0$};
\draw[color=black] (-13.741681758584292,-0.014636484854099574) node[above] {$\bf{l}'_1$};
\draw[color=black] (-13.741681758584292,0.9792112028132096) node[above] {$\bf{l}'_2$};
\draw[color=black] (-13.741681758584292,1.9730588904805186) node[above] {$\bf{l}'_3$};
\draw[color=black] (-13.741681758584292,2.9853111649564816) node[above] {$\bf{l}'_4$};
\draw[color=black] (-13.741681758584292,3.979158852623791) node[above] {$\bf{l}'_5$};

\draw [fill=black] (-4.82,-1.46) circle (2pt);
\draw[color=black] (-4.613006701491978,-1.0452933461387166) node[below left] {$v_1$};

\draw [fill=black] (-4.666666666666667,-1) circle (2pt);
\draw[color=black] (-4.465770007022747,-0.5851786759223696) node[left] {$s_0$};
\draw [fill=black] (2.3333333333333335,-1) circle (2pt);
\draw[color=black] (2.5279729802657194,-0.5851786759223696) node {$t_0$};

\draw [fill=black] (2.64,-1.46) circle (2pt) node[right] {$v_n$};

\draw [fill=black] (-4.333333333333333,0) circle (2pt);
\draw[color=black] (-4.134487444466978,0.4086690117449395) node[left] {$s_1$};
\draw [fill=black] (-4,1) circle (2pt);
\draw[color=black] (-3.8032048819112085,1.4209212862209024) node[left] {$s_2$};
\draw [fill=black] (-3.6666666666666665,2) circle (2pt);
\draw[color=black] (-3.471922319355439,2.414768973888212) node[left] {$s_3$};
\draw [fill=black] (-3.3333333333333335,3) circle (2pt);
\draw[color=black] (-3.122235169991016,3.408616661555521) node[left] {$s_4$};
\draw [fill=black] (-3,4) circle (2pt);
\draw[color=black] (-2.7909526074352464,4.420868936031484) node[left] {$s_5$};
\draw [fill=black] (-1,4) circle (2pt);
\draw[color=black] (-0.8032572321006293,4.420868936031484) node {$t_5$};
\draw [fill=black] (-0.3333333333333333,3) circle (2pt);
\draw[color=black] (-0.12228752018043654,3.408616661555521) node {$t_4$};
\draw [fill=black] (0.3333333333333333,2) circle (2pt);
\draw[color=black] (0.5402776049311024,2.414768973888212) node {$t_3$};
\draw [fill=black] (1,1) circle (2pt);
\draw[color=black] (1.2028427300426414,1.4209212862209024) node {$t_2$};
\draw [fill=black] (1.6666666666666667,0) circle (2pt);
\draw[color=black] (1.8654078551541802,0.4086690117449395) node {$t_1$};
\end{scriptsize}
\end{tikzpicture}
\caption{Illustration to notations in proof of Lemma \ref{lem:step_of_ext}.}    \label{pic:not_lines}
\end{figure}
		 

		The proof is in $m + 1$ steps. 
		On $j$th step we will prove that the sequence $w_0$ is $(r_{j} - l_{j})$-periodic, and that that $w_{j'}$ is $(r_{j - j'} - l_{j -j'})$-periodic for each $j' \leq j$. 
		
		{\underline{Step 0.}} Let $x$ be an integer point from the segment $s_0t_0$ such that $x + vr_0$ also belongs to the segment $s_0t_0$.   
		Since $F_0 \in \cal{P}$, applying \eqref{eq:sum0_z2}, we get that 
		\[
			\sum\limits_{t \in F_0} \mbf{w}(x + t) = 0.
		\]
		It terms of $(u,v)$-representation, this is equivalent to 
		\[
			\sum_{i = 0}^m \sum\limits_{j = l_i}^{r_i-1} \mbf{w}(x + (i, j)_{(u, v)}) = 0.
		\]
		Since $\mbf{w}(y) = 0$ for each $y \in D_0 \cap \bb{Z}^2$, we have
		\[
			\sum_{j = 0}^{r_0 - 1} \mbf{w}(x + vj) = 0.
		\]
		Shifting the coordinates by $v$, with the same arguments we obtain
		$$\sum_{j = 0}^{r_0 - 1} \mbf{w}(x + v + vj) = 0.$$
		After subtracting these equations we get $\mbf{w}(x) = \mbf{w}(x + r_0v)$, i.e., $w_0$ is $r_0$-periodic.
		
		\vspace{1cm}
		
		Suppose we completed all steps before step $k$, i.e., $w_{i}$ is $r_{i'} - l_{i'}$-periodic for each $i = 0, 1 \ldots, k-1$ and $i' = 0, 1, \ldots, k - i - 1$.  
		
		\underline{Step $k$.} Let $x$ be an integer point from the segment $s_jt_j$ such that $x + r_0v$ also belongs to the segment $s_jt_j$. As above, applying applying \eqref{eq:sum0_z2}, we get
		\[
			\sum\limits_{t \in F_0} \mbf{w}(x + t) = 0.
		\]
		Similarly to the proof on step $0$, we get 
		\begin{equation} \label{eq:step1}
			\sum_{i = 0}^{j} \sum_{j=l_i}^{r_i - 1} \mbf{w}(x + iu + jv) = 0;
		\end{equation}
		
		 Shifting our considerations by $v$, we get 
		\begin{equation} \label{eq:step1'}
			\sum_{i = 0}^{j} \sum_{j=l_i}^{r_i - 1} \mbf{w}(x + iu + jv + v) = 0,
		\end{equation}
		provided that $x + v$ and $x + (r_0 + 1)v$ belong to $s_jt_j$.
		 Using \eqref{eq:step1} and \eqref{eq:step1'}, we get
		\[
			\sum_{i = 0}^{j} (\mbf{w}(x + iu + l_iv) - \mbf{w}(x + iu + r_i v)) = 0;
		\]
		
		Then
		\[
			\mbf{w}(x + r_0v) = \mbf{w}(x) + 
			\sum_{i= 1}^{j} (\mbf{w}(x + iu + l_i v) - \mbf{w}(x + iu + r_i v)) 
		\]
		for each $x \in s_it_i \cap \bb{Z}^2$ such that $x + r_0v \in s_it_i \cap \bb{Z}^2$ . Then
		\[
			\mbf{w}(x + 2r_0v) = \mbf{w}(x + r_0v) + 
			\sum_{i= 1}^{j} (\mbf{w}(x + iu + (l_i + r_0) v) - \mbf{w}(x + iu + (r_i + r_0) v)). 
		\] 
		By previous steps we have $\mbf{w}(x + iu + kv) = \mbf{w}(x + iu + (k + r_0)v)$ for each $x$  such that $x + iu + kv$ and $x + iu + (k + r_0)v$ belong to $s_it_i \cap \bb{Z}^2$ 
		This sequence contains at most $|A|$ distinct integer numbers. 
		If $N(|A|, \cal{P})$ is big enough, namely $N(|A|, \cal{P}) > (|A| + 2) r_0 ||v||$,		
		then $\mbf{w}(x) = \mbf{w}(x + r_0v)$, i.e., the sequence $w_i$ is $r_0$-periodic and
		\begin{equation} \label{eq:step3}
			\sum_{i = 1}^{j} (\mbf{w}(x + iu + l_iv) - \mbf{w}(x + iu + r_i v)) = 0.
		\end{equation}
		
		 Now, in the same way, we prove that $w_{i-1}$ is $(r_1 - l_1)$-periodic. Since $w_i$ is $r_0$-periodic, by \eqref{eq:step3} we obtain 
		\[
			\mbf{w}(x + u + (r_1- l_1)v) = \mbf{w}(x + u) +
			\sum_{i= 2}^{j} (\mbf{w}(x + iu + (l_i-l_1) v) - \mbf{w}(x + iu + (r_i -l_1) v)).
		\]
		Using the same arguments, we prove that 
		\[ \mbf{w}(x), \mbf{w}(x + (r_1 - l_1)v), \ldots\] 
		is a long arithmetic progression and hence it is constant. So, $w_{j-1}$ is $(r_1 - l_1)$-periodic. 
		
		With similar arguments we prove that $w_{j-2}$ is $(r_2 - l_2)$-periodic, $w_{j-3}$ is $(r_3 - l_3)$-periodic, \ldots, $w_0$ is $(r_k - l_k)$-periodic.

		After $m + 1$ steps, we get that $w_0$ is $(r_j - l_j)$-periodic for each $j{=0,\ldots,m}$. Then $w_0$ is $\gcd(r_0 - l_0, r_1 -l_1, \ldots, r_n - l_n)$-periodic. Since $\cal{P}$ is relatively prime, we have $\gcd(r_0 - l_0, r_1 -l_1, \ldots r_n - l_n) = 1$. Then $w_0$ is constant and then $w_0$ is an all-zero sequence. 
		So, if we put $N(|A|, \cal{P}) > |\cal{P}| \cdot diam(\cal{P}) \cdot (|A| + 2)$, 
	    where $diam(\cal{P})$ is the diameter of the convex hull of $F_0$, then the progressions are long enough for our goals. 
	\end{proof}
	
	Now we prove Proposition \ref{lem:full_ext}.
	
	\begin{proof}[Proof of Proposition \ref{lem:full_ext}]
		Let $O$ be a point inside the polygon $D_0$ and $1 + \alpha$ be a coefficient of homothety, where $\alpha$ is a small positive real number that we define later.
		Let $D_1$ be the image of $D_0$ by homothety with the center $O$ and the coefficient $1 + \alpha$. Let $v_1, v_2, \ldots, v_n$ be the vertices of $D_0$ and $u_1, u_2, \ldots , u_n$ be the vertices of $D_1$ such that $H(v_i) = u_i$.
		
		The idea of the proof of this proposition is as follows. We will split the polygon $D_1$ into several polygons $G_1, \dots, G_n$ (see Fig.~\ref{pic:ill_prop2}), and, using Lemma \ref{lem:step_of_ext}, we show consecutively that in each of them the word is equal to $0$. Now we proceed with the details of this process.
		
		Let $d_i$ be the distance between neighbouring parallel lines that are parallel to the edge $v_iv_{i+1}$. We set $\delta = \min(d_1, d_2, \ldots d_n)$. 
		
		Let $v_n'$ and $v_1'$ be the intersections of the lines $v_{n-1} v_{n}$ and $v_1v_2$ with the edge $u_n u_1$. If $N(|A|, \cal{P})$ is big enough, then we can apply Lemma \ref{lem:step_of_ext} several times and prove that $\mbf{w}$ is equal to $0$ inside the polygon $G_1$ with vertices  $v_1'v_2v_3 \ldots v_{n-1}v_n'$. If there exist $m_1$ integer lines between $v_1v_n$ and $u_1u_n$, then we apply Lemma~\ref{lem:step_of_ext} $m_1$ times for the lines parallel to the edge $v_1v_n$.
		
		So, we just proved that $\mbf{w}$ is equal to $0$ inside of the polygon $G_1$. Let $v_2'$ be the intersection of the lines $v_2v_3$ and $u_1u_2$. Now, we apply Lemma~\ref{lem:step_of_ext} $m_2$ times. As a result, we prove that $\mbf{w}$ is equal to $0$ inside of the polygon $G_2$ with vertices $v_n'u_1v_2'v_3v_4 \ldots v_{n-1}$. 
		
		Continuing this line of reasoning, we prove that $\mbf{w}$ is  equal to $0$ inside  of the polygon $D_1$. On the $j$'th  step we show that $\mbf{w}$ is equal to $0$ inside of the polygon $G_i$ with vertices $v_n'u_1u_2 \ldots u_{i-1}v_i'v_{i+1} \ldots v_{n-1}$, where $v_i'$ is the intersection of the lines $v_iv_{i+1}$ and $u_{i-1}u_i$ (see Fig.~\ref{pic:ill_prop2}).
		
		So, on the $(n-1)$'th step we show that $\mbf{w}$ is equal to $0$ inside of the polygon $G_{n-1} = u_1u_2 \ldots u_{n-2}v_{n-1}'v_n'$.  
		Finally we apply  Lemma~\ref{lem:step_of_ext} $m_n$ times and show that $\mbf{w}(x) = 0$ for each integer point of the polygon $D_1$.
		
		
		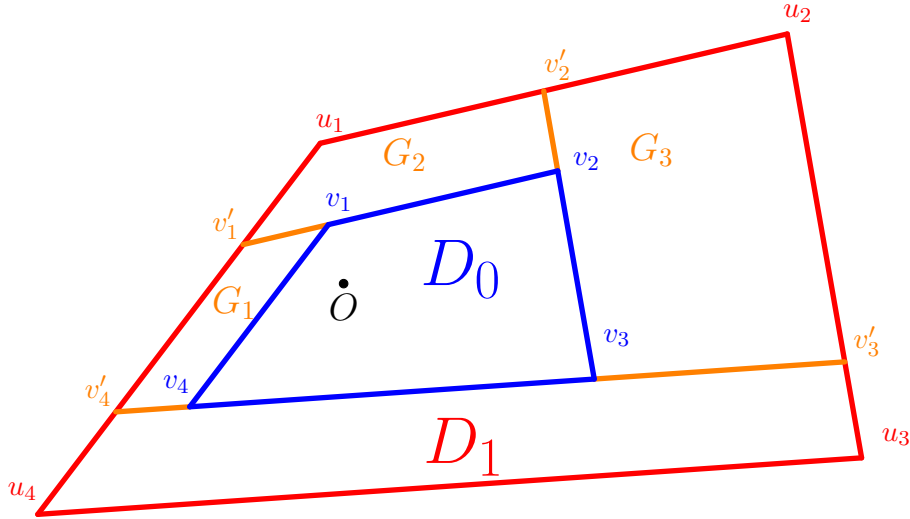
\begin{figure}[h]
		\centering
		\begin{tikzpicture}[line cap=round,line join=round,>=triangle 45,x=1cm,y=1cm, scale = 0.4]
\clip(-17.166005088003452,-10.709169869344567) rectangle (16.846070428679404,12.215338309037909);
\draw [line width=2pt, red] (-15.26657342644998,-7.906019208918661)-- (-5.979524707650827,4.395247076508287);
\draw [line width=2pt, red] (-5.979524707650827,4.395247076508287)-- (9.376691814223216,8.0204546904255);
\draw [line width=2pt, red] (9.376691814223216,8.0204546904255)-- (11.820652003380888,-6.032316397231113);
\draw [line width=2pt, red] (11.820652003380888,-6.032316397231113)-- (-15.26657342644998,-7.906019208918661);

\draw [line width=2pt, orange] (-10.27,-4.34)-- (-12.701318409902713,-4.5081814238428946);
\draw [line width=2pt, orange] (-5.71,1.7)-- (-8.51412051659032,1.0380192944919397);
\draw [line width=2pt, orange] (1.83,3.48)-- (1.369117825711954,6.130072502156264);
\draw [line width=2pt, orange] (3.03,-3.42)-- (11.267241473937329,-2.8502058529306513);

\draw [line width=2pt,color=blue] (-5.71,1.7)-- (1.83,3.48);
\draw [line width=2pt,color=blue] (1.83,3.48)-- (3.03,-3.42);
\draw [line width=2pt,color=blue] (3.03,-3.42)-- (-10.27,-4.34);
\draw [line width=2pt,color=blue] (-10.27,-4.34)-- (-5.71,1.7);

\begin{small}
\draw[blue] (-5.363392549400713,2.436459319727894) node {$v_1$};
\draw[blue] (2.0, 3.8) node[right] {$v_2$};
\draw[blue] (3.0,-2.6748796199114566) node[above right] {$v_3$};
\draw[blue] (-9.923808729078933,-3.5930842198466695) node[left] {$v_4$};

\draw[fill = black] (-5.210358449411511,-0.256940840082063) circle (4pt) node[below] {\large{$O$}};

\draw[red] (-14.943327208724758,-7.143475339596158) node[left] {$u_4$};
\draw[red] (12.143708489363997,-5.337672959723573) node[right] {$u_3$};
\draw[red] (-5.638853929381276,5.068645839542171) node {$u_1$};
\draw[red] (9.725769709534605,8.710857419285182) node {$u_2$};

\draw[orange] (-12.5,-3.807331959831552) node[left] {$v_4'$};
\draw[orange] (-8.3,1.7325024597775642) node[left] {$v_1'$};
\draw[orange] (12,-2.1545636799481693) node {$v_3'$};
\draw[orange] (1.8292101500917815,7.0) node {$v_2'$};

\draw[blue] (-1.35389912968362,0.23276827988338372) node {\Huge{$D_0$}};
\draw[orange] (-8.8,-0.8996840600367119) node {\large{$G_1$}};
\draw[orange] (-3.2,4.0) node {\large{$G_2$}};
\draw[orange] (4.920498969873662,4.181048059604798) node {\large{$G_3$}};
\draw[red] (-1.262078669690099,-5.664322919771084) node {\Huge{$D_1$}};
\end{small}
\end{tikzpicture}
		\caption{Illustration to construction of proof of Proposition~\ref{lem:full_ext}} \label{pic:ill_prop2}
		\end{figure}
		
		Now, we define $N(|A|, \cal{P})$. We can say that all elementary extensions (i.e., the results of applying Lemma \ref{lem:step_of_ext} once) can make the corresponding edge shorter by at most $\Delta$. This number depends on the pattern only. So, the minimal length of the edge in our proof should be at least 
		\[ N - \max(m_1, m_2 \ldots m_n) \cdot \Delta. \]
		Since 
		\[ m_i < \frac {\alpha dist(O, v_iv_{i-1})} {\delta},\] 
		we have 
		\[ \max(m_1, m_2 \ldots , m_n) < \frac {\alpha diam(D_0)} {\delta}.\] 
		We set $\alpha = \frac 1 {diam(D_0)}$. 
				Then we can take
		\[ N(|A|, \cal{P}) = \dfrac {\Delta} {\delta} + diam(\cal{P}) \cdot |\cal{P}| \cdot (|A| + 2).\]
	\end{proof}
	
	Since $diam(D_1) = diam(D_0) + 1$, the following is a direct corollary of Proposition \ref{lem:full_ext}: 
	
	\begin{corollary} \label{cor:sum_then0}
		Let $A$ be a finite subset of  complex numbers containing
		$0$. 
		Let $\cal{P}$ be a relatively prime convex pattern and $\bf{w}$ be a two-dimensional word on $A$ such that 
		\[ \sum_{x \in F} {\bf{w}}(x) = 0\]
		for each figure $F \in \cal{P}$. 
		Let $D$ be the convex hull of the figure $F$.
		If there exists a number $N(|A|, \cal{P})$ 
		and a polygon $D_0$ parallel to $D$ such that
		\begin{itemize}
			\item any edge of $D_0$ contains at least $N$ integer points, where $N > N(|A|, \cal{P})$;
			\item $\mbf{w}(x) = 0$ for each $x \in D_0 \cap \bb{Z}^2$.
		\end{itemize}
		Then $\mbf{w}(x) = 0$ for each $x$ in $\bb{Z}^2$.
	\end{corollary}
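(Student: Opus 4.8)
The plan is to iterate Proposition~\ref{lem:full_ext} and let the resulting nested polygons exhaust the whole plane. Fix once and for all an interior point $O$ of $D_0$. Inspecting the proof of Proposition~\ref{lem:full_ext}, the homothety centre used there may be taken to be any prescribed interior point of the current polygon (only the coefficient $1+\alpha$, and the threshold $N(|A|,\cal{P})$, are forced, and the latter depends on $|A|$ and $\cal{P}$ alone), so at every step we will use this same $O$. Starting from $D_0$, whose edges all exceed $N(|A|,\cal{P})$ and on which $\mbf{w}$ vanishes at integer points, Proposition~\ref{lem:full_ext} yields $D_1=H_1(D_0)$, with $H_1$ a homothety centred at $O$ of coefficient $>1$, with $diam(D_1)=diam(D_0)+1$ and $\mbf{w}(x)=0$ for all $x\in D_1\cap\bb{Z}^2$. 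Since the coefficient exceeds $1$, each edge of $D_1$ is at least as long as the corresponding edge of $D_0$, so the hypotheses of Proposition~\ref{lem:full_ext} hold for $D_1$ as well (with the same $N(|A|,\cal{P})$), and we iterate: we obtain polygons $D_0\subseteq D_1\subseteq D_2\subseteq\cdots$, all parallel to $D$, with $\mbf{w}$ vanishing on every $D_k\cap\bb{Z}^2$ and $diam(D_k)=diam(D_0)+k$. The inclusions $D_{k}\subseteq D_{k+1}$ follow from convexity of $D_k$ together with $O\in D_k$ and coefficient $>1$, and, by monotonicity of the interior under inclusion, $O$ stays interior to every $D_k$, so the same $O$ is a legitimate centre at each step.

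It remains to check that $\bigcup_{k\ge 0}D_k=\bb{R}^2$. Because all the homotheties share the centre $O$, their composition is again a homothety centred at $O$, so $D_k=H_{C_k}(D_0)$ for a homothety $H_{C_k}$ centred at $O$ of coefficient $C_k=diam(D_k)/diam(D_0)=1+k/diam(D_0)\to\infty$. Choosing $\varepsilon>0$ with the Euclidean ball $B(O,\varepsilon)\subseteq D_0$ (possible as $O$ is interior to $D_0$), we get $B(O,C_k\varepsilon)=H_{C_k}(B(O,\varepsilon))\subseteq D_k$ with $C_k\varepsilon\to\infty$, hence the union of the $D_k$ contains every point of the plane, in particular every point of $\bb{Z}^2$. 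Since $\mbf{w}(x)=0$ on each $D_k\cap\bb{Z}^2$, we conclude $\mbf{w}(x)=0$ for all $x\in\bb{Z}^2$.

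The one genuinely delicate point is this exhaustion step: a nested family of scaled copies of a fixed bounded polygon need not cover the plane --- if the homothety centres were allowed to drift toward the boundary, the union could fill only a cone or a half-plane --- so it is essential that the centre be held fixed at an interior point $O$, whereupon the composed homotheties blow up a fixed ball without bound. Everything else is a routine induction verifying, at each step, the two hypotheses of Proposition~\ref{lem:full_ext} (long edges, vanishing on integer points). Finally, the mild discrepancy between ``each edge longer than $N(|A|,\cal{P})$'' in Proposition~\ref{lem:full_ext} and ``each edge contains at least $N$ integer points'' in the statement of Corollary~\ref{cor:sum_then0} is harmless, since along a fixed rational direction (one of finitely many edge directions of $\cal{P}$) the number of integer points on a segment and its length differ by a bounded factor depending only on that direction, hence only on $\cal{P}$.
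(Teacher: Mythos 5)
Your proof is correct and follows the paper's intended argument: the paper treats this as a direct corollary obtained by iterating Proposition~\ref{lem:full_ext}, using only the fact that the diameter grows by $1$ at each step so the polygons become arbitrarily large. Your extra care in fixing the homothety centre $O$ (so that the nested polygons genuinely exhaust the plane rather than, say, only a quadrant) addresses a real subtlety that the paper leaves implicit, since the proposition as stated guarantees only \emph{some} homothety of coefficient $>1$ and nested similar copies with unboundedly growing diameter need not cover all of $\bb{R}^2$ if the centres drift.
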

	
	
 	
 	Now using this corollary we are ready to prove the following theorem, which is a stonger form of the ``if'' part of Theorem \ref{th:main_convex} stated not only for words, but also for sums of values of integer-valued  functions:
 	
 	\begin{theorem} \label{th:sum_then_per}
		Let $A$ be a finite subset of complex numbers and $\cal{P}$ be a convex relatively prime pattern. Let $\mbf{w}$ be a two-dimensional word on the alphabet $A$ and 
		\[
			\sum\limits_{x \in F} \mbf{w}(x) = C
		\] 
		for each $F \in \cal{P}$ and some complex number $C$. Then $\mbf{w}$ is fully periodic.  
	\end{theorem}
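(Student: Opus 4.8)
The plan is to reduce Theorem~\ref{th:sum_then_per} to Corollary~\ref{cor:sum_then0} by the standard difference trick. Given $\mbf{w}$ with constant $\cal{P}$-sum $C$, consider two translates of $\mbf{w}$ and look at their difference. More precisely, since the alphabet $A$ is finite, among the infinitely many translates of $\mbf{w}$ restricted to a sufficiently large window (one big enough to contain a translate of $F_{\cal{P}}$ together with all the room Corollary~\ref{cor:sum_then0} needs), two must agree. That is, there are distinct vectors $p\ne q$ such that $\mbf{w}$ agrees with its translate $T^{p-q}\mbf{w}$ on a large polygon $D_0$ parallel to $D=\mathrm{conv}(F_{\cal{P}})$ whose edges each contain more than $N(|A|,\cal{P})$ integer points. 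The existence of such a large parallel window follows from a pigeonhole argument: fix a family of nested homothetic copies of $D$, partition $\bb{Z}^2$ into translates of one fixed large parallel polygon, and among these infinitely many translates of $\mbf{w}$ two must coincide on the window since there are only finitely many colourings of a finite set.

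Next, set $\mbf{s}(x) = \mbf{w}(x) - \mbf{w}(x + (p-q))$. Because the $\cal{P}$-sum of $\mbf{w}$ is the constant $C$ for \emph{every} figure in $\cal{P}$, the $\cal{P}$-sum of $\mbf{s}$ is $C - C = 0$ for every figure in $\cal{P}$; this is exactly hypothesis~\eqref{eq:sum0_z2}. Moreover $\mbf{s}$ takes values in the finite set $A - A \subseteq \bb{C}$, which contains $0$, so Corollary~\ref{cor:sum_then0} applies to $\mbf{s}$ with the polygon $D_0$ (on which $\mbf{s}$ vanishes by construction). The conclusion is that $\mbf{s}\equiv 0$ on all of $\bb{Z}^2$, i.e., $\mbf{w}(x) = \mbf{w}(x+(p-q))$ for all $x$, so $\mbf{w}$ is $(p-q)$-periodic.

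This gives one period vector, but full periodicity needs two linearly independent ones. To get the second, I would first pass to a quotient: since $\mbf{w}$ is $(p-q)$-periodic, it descends to a word on the cylinder $\bb{Z}^2 / \bb{Z}(p-q)$, and the constant-sum condition persists. Alternatively, and more in the spirit of the window argument, I would rerun the pigeonhole step but now restricting the candidate translation vectors $p-q$ to lie in a direction \emph{not} parallel to the already-found period: one can always choose the large parallel window far enough away in a transverse direction, and among those translates two coincide on a large enough parallel polygon, yielding a second period vector $v$ with $v$ not collinear with $p-q$. Applying the difference trick and Corollary~\ref{cor:sum_then0} again shows $\mbf{w}$ is $v$-periodic, and $\{p-q, v\}$ are linearly independent, so $\mbf{w}$ is fully periodic.

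The main obstacle is the bookkeeping in the pigeonhole step: one must ensure that the agreement window produced is genuinely a polygon \emph{parallel} to $D$ (in the sense of Definition~\ref{def:par&ext_fig}) with \emph{all} edges long enough, not merely a large axis-aligned box. This is handled by choosing the window itself to be a fixed large parallel copy of $D$ and tiling (or near-tiling) $\bb{Z}^2$ by its translates, then applying finiteness of $A$ to the induced colouring; the transversality needed for the second period vector requires a little care to make sure the two coincident translates differ by a vector off the first period line, which can be arranged by spacing the window copies along a lattice line in a transverse direction. Everything else is routine once Corollary~\ref{cor:sum_then0} is in hand.
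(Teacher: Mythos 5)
Your proposal is correct and follows essentially the same route as the paper: pigeonhole on translates restricted to a large polygon parallel to $\mathrm{conv}(F_{\cal{P}})$, form the difference word (which has constant $\cal{P}$-sum $0$ and vanishes on the window), apply Corollary~\ref{cor:sum_then0} to conclude it is identically zero, and repeat in a second direction. The paper handles the transversality point you worry about more simply, by taking the translates along a fixed basis vector $u$ (so the resulting period is automatically a multiple of $u$) and then repeating along an independent basis vector $v$.
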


	\begin{proof}
		Let $F_0$ be a figure of the pattern $\cal{P}$ and $D_0$ be the convex hull of the figure $F_0$. Suppose that $D$ is a polygon such that
		\begin{itemize}
			\item $D$ and $D_0$ are parallel;
			\item any edge of $D$ is bigger than $N(|A|^2, \cal{P})$. 
		\end{itemize}
		
		Let $u, v$ be a basis of $\bb{Z}^2$. We let $F_D$ denote the figure $D \cap \bb{Z}^2$. 
		By pigeonhole principle, there exist $i_1$ and $i_2$ such that 
		$\mbf{w}(x + i_1u) = \mbf{w}(x + i_2u)$ for each $x \in F_D$.  
		
		Consider a new two-dimensional word $\mbf{z}$ such that $\mbf{z}(x) = \mbf{w}(x + i_1u) - \mbf{w}(x + i_2u)$. By Corollary \ref{cor:sum_then0}, we have $\mbf{z}(x) = 0$ for each $x \in \bb{Z}^2$. 
		Then $\mbf{w}(x) = \mbf{w}(x + (i_2 - i_1)u)$, i.e., $\mbf{w}$ is $(i_2- i_1)u$-periodic. 
		
		In the same way we can find another periodicity vector $(j_2 - j_1)v$; it follows that $\mbf{w}$ is fully periodic. 
	\end{proof}
	
	Now, we conclude this section by proving the Theorem \ref{th:sum_then_per} and hence our main result, Theorem \ref{th:main_convex}.
	
	\begin{proof}[Proof of Theorem \ref{th:main_convex}.] 
		Let $\mbf{w}$ be a word on an  alphabet $A=\{a_1,\ldots, a_n\}$ and $a_{\bf{w}}(\cal{P}) = 1$. 
		We can associate a complex number with each letter of the alphabet, so that there is a bijection between $A$ and a finite subset $A'$ of complex numbers. For example, we can take $A'=\{1, \ldots, n\}$ and build a bijection $f$ as $f(a_i)= i$. Let $\mbf{w}'$ be the word on the alphabet $A'$ such that $\mbf{w}'(x) = f(\mbf{w}(x))$ for each $x \in \bb{Z}^2$. Evidently, $a_{\mbf{w}'}(\cal{P}) = a_{\mbf{w}}(\cal{P})$. 
		Since $a_{\mbf{w}'}(\cal{P}) = 1$, we have 
		\[ \sum\limits_{x \in F} \mbf{w}(x) = C. \]
		for each  $F\in \cal{P}$ and some integer $C$.
		By Theorem~\ref{th:sum_then_per}, $\mbf{w}'$ is fully periodic. So, $\mbf{w}$ is also fully periodic.
	\end{proof}

	\section{Conclusions and open problems}
	
A related problem although in different terminology has been considered in  \cite{AP08}. In that paper, periodicity of centered functions on infinite regular grids has been studied. Let $G=(V,E)$ be a graph, $r$ be an integer. A function $f:V\to\mathbb{R}$ is called {\emph{centered}} of radius $r$ if the sum of its values in each ball of radius $r$ is equal to some constant. One can consider $\mathbb{Z}^2$ as a graph, where the vertices are pairs of integers, and the edges connect vertices which differ in one coordinate by $1$. The main result of \cite{AP08} is the following:

	\begin{theorem}  \cite{AP08}  Let $f: \mathbb{Z}^2 \to \mathbb{Z}$ be a bounded centered function of radius $r\geq1$. Then $f$ is periodic. \end{theorem}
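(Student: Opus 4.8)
Let $f:\mathbb{Z}^2\to\mathbb{Z}$ be a bounded centered function of radius $r\geq1$, meaning that the sum of $f$ over every ball of radius $r$ (in the graph metric on $\mathbb{Z}^2$, i.e.\ the $\ell^1$-ball) is a fixed constant $C$. We must show $f$ is periodic.

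**Approach.** The plan is to reduce this to the results already developed in Section~\ref{section:2d}. The key observation is that a ball of radius $r$ in the $\ell^1$-metric on $\mathbb{Z}^2$ is a diamond (a rotated square), which is a \emph{convex} figure. Let $\cal{P}$ be the pattern consisting of all translates of this diamond; then the hypothesis says exactly that $\sum_{x\in F}f(x)=C$ for every $F\in\cal{P}$. Since $f$ is bounded and integer-valued, its image is a finite subset of $\mathbb{Z}\subset\mathbb{C}$, so $f$ is a two-dimensional word on a finite alphabet $A\subset\mathbb{C}$. By Theorem~\ref{th:sum_then_per}, it suffices to check that $\cal{P}$ is relatively prime, and then $f$ is fully periodic, hence in particular periodic.

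**Verifying relative primality.** First I would compute $Poly_{\cal{P}}$. The diamond $\{(i,j):|i|+|j|\le r\}$, after shifting to the canonical position (so minimal coordinates are $0$), has polynomial $Poly_{\cal{P}}(x,y)=\sum_{|i|+|j|\le r} x^{i+r}y^{j+r}$. For relative primality I use the geometric characterization from the excerpt: for each direction vector $u$, I must check that the lengths of the intersections of lines parallel to $u$ with the diamond have gcd equal to $1$. For the axis directions $u=(1,0)$ and $u=(0,1)$ the successive chord lengths through the diamond are $1,3,5,\ldots,2r+1,\ldots,3,1$, which contain consecutive odd numbers and in particular $1$, so the gcd is $1$. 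For a general integer direction $u=(a,b)$ with $\gcd(a,b)=1$, I would argue that the extreme supporting lines parallel to $u$ touch the diamond in exactly one lattice point (a vertex of the diamond, or the unique nearest lattice point), giving a chord of length $1$; this uses that the vertices of the diamond are lattice points and the polygon is strictly convex at those vertices in the relevant sense. Hence $\cal{P}$ is $u$-relatively prime for every $u$, so $\cal{P}$ is relatively prime, and Theorem~\ref{th:sum_then_per} applies.

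**Main obstacle.** The only genuinely nontrivial step is the claim that for \emph{every} rational direction some chord of the diamond has exactly one lattice point; one must be a little careful because the diamond is a polygon with four edges, and a supporting line parallel to an edge direction could contain a whole edge's worth of lattice points, while intermediate lines could conceivably all have length a multiple of some $d>1$. The clean way around this is to observe that the diamond contains the corner configuration $\{(0,0),(1,0),(0,1)\}$ near each vertex (after centering): for any direction $u$ one of the two lattice lines through $(0,0)$ and its neighbor in the ``outward'' sense will meet the diamond in a strictly shorter chord than an adjacent line, and pushing this all the way to the boundary produces a chord of length $1$. Alternatively, and perhaps more robustly, one can bypass the geometric argument entirely and note that $Poly_{\cal{P}}$ has a strongly linear divisor $\sum_{i=0}^l x^{iv_1}y^{iv_2}$ only if, evaluating at $x=t^{v_1'},y=t^{v_2'}$ for a suitable integer linear functional killing $(v_1,v_2)$, we would force the univariate chord-length polynomial in that direction to be divisible by $1+t+\cdots+t^l$ with $l\ge1$ — but the axis chord-length sequence $1,3,5,\ldots$ is not divisible by any such factor, and the same holds in every direction by the corner argument. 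Either route completes the proof; I expect the geometric formulation to be the shortest to write.
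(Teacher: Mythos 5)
Your route is the paper's own: the paper obtains this theorem as a direct corollary of Theorem~\ref{th:main_convex} (via Theorem~\ref{th:sum_then_per}) applied to the pattern of the $\ell^1$-ball, which is convex, and the only substantive thing to check is that this pattern is relatively prime --- a check the paper does not even write out. The reduction itself (bounded integer-valued $\Rightarrow$ finite alphabet, constant ball-sums $\Rightarrow$ constant $\cal{P}$-sums, fully periodic $\Rightarrow$ periodic) is fine.

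However, the one claim you add that carries real content is false as stated: it is \emph{not} true that every rational direction admits a chord of the diamond containing exactly one lattice point. For the two edge-parallel directions $u=(1,\pm 1)$ and $r\geq 2$, the supporting lines contain entire edges of the diamond, each with $r+1$ lattice points, and the interior chords alternate; e.g.\ for $r=2$ and $u=(1,1)$ the chord lengths along the lines $y-x=2,1,0,-1,-2$ are $3,2,3,2,3$, so no chord has a single point. Your ``push the corner configuration to the boundary to get a chord of length~$1$'' step, on which both of your proposed routes rest, therefore fails in exactly the case you identified as the obstacle. The repair is immediate: in those two directions the outermost chord has length $r+1$ and its neighbour has length $r$, and $\gcd(r+1,r)=1$, so the pattern is still $u$-relatively prime; for every direction not parallel to an edge your argument is correct, since the supporting line meets the diamond only in a vertex, which is a lattice point (and the axis directions give chord lengths $1,3,\dots,2r+1,\dots,3,1$). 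With that substitution the verification of relative primality is complete and the proof coincides with the paper's.
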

	
	In fact, this theorem can be obtained as a direct corollary of Theorem \ref{th:main_convex}, where the pattern corresponds to a ball of radius $r$. The idea of the method used in \cite{AP08}, the method of so-called $R$-prolongable words, is somewhat similar to the method used in the current paper. 
	Both methods rely on certain areas with the same values of the function which have to be extended in the same way. 
	In fact, this paper gives a broad generalization of the results from \cite{AP08} from a very restricted shapes (balls of a given radius) to arbitrary convex figures. 
	The proofs in the general case are much more technical, and the statement of the general result is given in an algebraic form in terms of polynomials and their divisors. The proof is combinatorial; it would be interesting to try to find another proof based on some algebraic or symbolic dynamical  techniques.

	

	
	\bigskip
	
	In the paper we consider words for which abelian pattern complexity equals $1$ for some pattern $\cal{P}$  in the one- and two-dimensional cases. In the one-dimensional case we get that all words of this class are periodic. Besides that, we characterized patterns $\cal{P}$ for which there exists a function from $\mathbb{Z}$ to $\mathbb{C}$ with finitely many values such that it has constant sums over figures from $\cal{P}$. This gives a necessary condition on a pattern for existence of non-trivial words with abelian pattern complexity equal to one for the pattern.
	
	In the two-dimensional case we get a characterization of convex patterns  $\cal{P}$ for which  $a_{\mbf{w}}(\cal{P}) = 1$ only for fully periodic words $\mbf{w}$ (see Theorem~\ref{th:main_convex}). 
	
	An interesting problem is finding a generalization  of Theorem~\ref{th:main_convex} for non-convex patterns, i.e., characterizing patterns $\cal{P}$ such that abelian pattern complexity for these pattern equals 1 only for periodic words.  We suspect that our algebraic characterization from Theorem~\ref{th:main_convex} might also hold also for non-convex patterns, as well as for patterns with weights. Another interesting problem is to find a characterization of abelian rigid patterns in dimension higher than $2$.

\section{Acknowledgements}

	This research is supported by the Foundation for the Advancement of Theoretical Physics and Mathematics
``BASIS''.

\end{document}